\theoremstyle{plain}
\newtheorem{thm}{Theorem}[section]
\newtheorem{lem}{Lemma}[section]
\theoremstyle{definition}
\theoremstyle{remark}
\numberwithin{equation}{section}
\newcommand{\R}{\mathbb{R}}
\newcommand{\C}{\mathbb{C}}
\newcommand{\pa}{\partial}
\newcommand{\eps}{\varepsilon}
\newcommand{\jb}[1]{\langle #1 \rangle}
\newcommand{\jbf}[1]{\left\langle #1 \right\rangle}
\newcommand{\op}[1]{\mathcal{#1}}
\newcommand{\cc}[1]{\overline{#1}}
\DeclareMathOperator{\realpart}{\rm Re}
\DeclareMathOperator{\imagpart}{\rm Im}
\newcommand{\dis}{\displaystyle}
\title[Lifespan of solutions to cubic DNLS]
 {
  The lifespan of small solutions to cubic derivative  nonlinear Schr\"odinger 
  equations in one space dimension
 }
\author[Yuji Sagawa]{Yuji Sagawa}
\author[Hideaki Sunagawa]{Hideaki Sunagawa}
\address{Department of Mathematics, Graduate School of Science, 
         Osaka University. 
         Toyonaka, Osaka 560-0043, Japan.} 
\email{y-sagawa@cr.math.sci.osaka-u.ac.jp}
\email{sunagawa@math.sci.osaka-u.ac.jp}
\date{\today }   
\keywords{
 Derivative nonlinear Schr\"odinger equations; 
 Lifespan;  
 Detailed lower bound.
}
\subjclass[2010]{35Q55, 35B40}
\begin{document}

\begin{abstract} 
Consider the initial value problem for 
cubic derivative nonlinear Schr\"odinger equations in one space dimension. 
We provide a detailed lower bound estimate for the lifespan 
of the solution, which can be computed explicitly from the initial data 
and the nonlinear term. 
This is an extension and a refinement of the previous work by one of the 
authors [H.~Sunagawa: Osaka J. Math. {\bf 43} (2006), 771--789] where 
the gauge-invariant nonlinearity was treated. \\
\end{abstract}
\maketitle

\section{Introduction and the main result}  \label{sec_intro}

This paper is concerned with the lifespan of solutions to 
cubic derivative nonlinear Schr\"odinger equations in one space dimension 
with small initial data: 
\begin{align}
 \left\{\begin{array}{cl}
  i\pa_t u +\frac{1}{2}\pa_x^2u = N(u,\pa_x u), & t>0,\ x \in \R,\\
  u(0,x)=\eps \varphi(x), &x \in \R,
\end{array}\right.
\label{nls}
\end{align}
where $i=\sqrt{-1}$, $u=u(t,x)$ is a $\C$-valued unknown function, 
$\eps >0$ is a small parameter which is responsible for the size of 
the initial data, and $\varphi$ is  a prescribed $\C$-valued function 
which belongs to $H^3\cap H^{2,1}(\R)$. 
Here and later on as well, $H^s$ denotes the standard $L^2$-based Sobolev 
space of order $s$, and the weighted Sobolev space $H^{s,\sigma}$ is defined 
by $\{\phi \in L^2\,|\, \jb{\, \cdot \,}^{\sigma} \phi \in H^s\}$, 
equipped with the norm 
$\|\phi\|_{H^{s,\sigma}}=\|\jb{\, \cdot \,}^{\sigma} \phi \|_{H^s}$, 
where $\jb{x}=\sqrt{1+x^2}$. 
Throughout this paper, the nonlinear term $N(u,\pa_x u)$ is always assumed to 
be a cubic homogeneous polynomial in  $(u,\cc{u}, \pa_x u, \cc{\pa_x u})$ with 
complex coefficients.  We will often write $u_x$ for $\pa_x u$.

From the perturbative point of view, 
cubic nonlinear Schr\"odinger equations in 
one space dimension are of special interest because the best possible 
decay in $L_x^2$ of general cubic nonlinear terms is $O(t^{-1})$, 
so the cubic nonlinearity 
must be regarded as a long-range perturbation. In general, standard 
perturbative approach is valid only for $t\lesssim \exp(o(\eps^{-2}))$, and 
our problem is to make clear how the nonlinearity affects the behavior of the 
solutions for $t\gtrsim \exp(o(\eps^{-2}))$. 
Let us recall some known results briefly. 
The most well-studied case is the gauge-invariant case, that is the case where 
$N$ satisfies 
\begin{align}
 N(e^{i\theta}z,e^{i\theta}\zeta) 
 =
 e^{i \theta} N(z,\zeta), \qquad (z,\zeta)\in \C\times \C,\quad \theta \in \R.
\label{cond_gi}
\end{align}
There are a lot of works devoted to large-time behavior of the solution to 
\eqref{nls} under \eqref{cond_gi} (see e.g., \cite{Tsu}, 
\cite{KT}, \cite{O}, \cite{HN002}, \cite{HN003}, \cite{HNU}, 
\cite{HNS} and the references cited therein). On the other hand, 
if \eqref{cond_gi} is violated, the situation becomes delicate due to 
the appearance of oscillation structure. It is pointed out in \cite{HN2} 
(see also  \cite{HN1}, \cite{Tone}, \cite{N}) that 
contribution of non-gauge-invariant terms  may be regarded as a short-range 
perturbation if at least one derivative of $u$ is included, 
whereas, as studied in \cite{HN004}, \cite{HN005}, \cite{HN006}, \cite{HN007}, 
\cite{N2}, \cite{HN008} etc., it turns out that contribution of 
non-gauge-invariant cubic terms without derivative is quite difficult to 
handle. In what follows, let us assume that $N$ satisfies 
\begin{align}
 N(e^{i\theta},0) =e^{i \theta} N(1,0), \qquad \theta \in \R,
\label{cond_weak}
\end{align}
to exclude the worst terms $u^3$, $\cc{u}^2 u$ and $\cc{u}^3$ 
(see the appendix for explicit representation of $N$ 
satisfying \eqref{cond_weak}). We also define $\nu:\R\to \C$ by 
\[
 \nu(\xi):=\frac{1}{2\pi i} \oint_{|z|=1} N(z,i\xi z) \frac{dz}{z^2},
 \qquad \xi \in \R.
\]
Roughly speaking, this contour integral extracts the contribution of 
the gauge-invariant part in $N$. Remark that $\nu(\xi)$ coincides with 
$N(1,i\xi)$ in the gauge-invariant case 
(see also \eqref{nu_explicit} below). 
Typical previous results on global existence and large-time asymptotic 
behavior of solutions to \eqref{nls} under \eqref{cond_weak} 
can be summarized in terms of $\nu(\xi)$ as follows 
(see \cite{HN2} and \cite{HNS} for the detail): 
\begin{itemize}
\item[(i)] 
If $\imagpart \nu(\xi) \le 0$ for all $\xi \in \R$, then the solution exists 
globally  in $C([0,\infty);H^3 \cap H^{2,1}(\R))$ for sufficiently small 
$\eps$. Moreover the solution satisfies
\[
 \|u(t)\|_{L^{\infty}} \le \frac{C\eps}{\sqrt{1+t}}, \quad t\ge 0,
\]
where the constant $C$ is independent of $\eps$.

\item[(ii)]
If $\imagpart \nu(\xi)=0$ for all $\xi \in \R$, then the solution has 
a logarithmic oscillating factor in the asymptotic profile, 
i.e., it holds that 
\[
 u(t,x)= \frac{1}{\sqrt{t}} \alpha(x/t) 
 \exp\left( 
  i\frac{x^2}{2t} - i|\alpha(x/t)|^2 \realpart \nu (x/t) \log t
 \right)
 + 
o(t^{-1/2})
\]
as $t \to +\infty$ uniformly in $x \in \R$, 
where  $\alpha(\xi)$ is a suitable $\C$-valued function satisfying 
$|\alpha(\xi)| \lesssim \eps$.
In particular, the solution is asymptotically free if and only if 
$\nu (\xi)$ vanishes identically on $\R$.

\item[(iii)]
If $\sup_{\xi \in \R} \imagpart \nu(\xi)<0$, then the solution gains an 
additional logarithmic time-decay:
\[
 \|u(t)\|_{L^{\infty}} \le \frac{C'\eps}{\sqrt{(1+t)\{ 1+\eps^2 \log(2+t)\}}}, 
 \quad t\ge 0, 
\]
where the constant $C'$ is independent of $\eps$. 
\end{itemize}

Now, let us turn our attentions to the remaining case: 
$\imagpart \nu(\xi_0)>0$ for some $\xi_0 \in \R$. 
To the authors' knowledge, there is no global existence result in that case, 
and many interesting problems are left unsolved especially 
when we focus on the issue of small data blow-up. 
In the previous paper \cite{Su}, 
lower bounds for the lifespan $T_{\eps}$ of the solution to \eqref{nls} 
are considered in detail under the assumption \eqref{cond_gi}. 
It is proved in \cite{Su} that 
\begin{align}
 \liminf_{\eps \to +0} \eps^2 \log T_{\eps} 
\ge 
\frac{1}{\dis{2\sup_{\xi \in \R}(|\hat{\varphi}(\xi)|^2 \imagpart N(1,i\xi))}}
=:\tau_0,
\label{est_Teps_gi}
\end{align}
where $\hat{\varphi}$ denotes the Fourier transform of $\varphi$, i.e.,
\[
 \hat{\varphi}(\xi)
 =
 \frac{1}{\sqrt{2\pi}} \int_{\R} e^{-iy\xi} \varphi(y)\, dy,\quad \xi \in \R,
\]
by constructing an approximate solution $u_a$ which blows up at the time 
$t=\exp(\tau_0/\eps^2)$ and getting an a priori estimate not for the solution 
$u$ itself but for the difference $u-u_a$. 
What is important in \eqref{est_Teps_gi} is that this is quite analogous to the 
famous results due to John \cite{J} and H\"ormander \cite{Hor1} which concern 
quasilinear wave equations in three space dimensions (see also \cite{Hor2} and 
\cite{De} for related results on the Klein-Gordon case). Remember that 
the detailed lifespan estimates obtained in \cite{J} and \cite{Hor1} are 
fairly sharp and have close connection with the so-called {\em null condition} 
introduced by Klainerman \cite{Kla} and Christodoulou \cite{Chr}. 
However, the approach exploited in \cite{Su} has the following two drawbacks: 
\begin{itemize}
\item
it heavily relies on the gauge-invariance \eqref{cond_gi},
\item
it requires higher regularity and faster decay as $|x| \to \infty$ for 
$\varphi$ than those for $u(t,\cdot)$.
\end{itemize}

The purpose of this paper is to improve these two points. 
To state the main result, let us define $\tilde{\tau}_0 \in (0,+\infty]$ 
by
\begin{align}
 \frac{1}{\tilde{\tau}_0}
 =
 2\sup_{\xi \in \R}\Bigl(|\hat{\varphi}(\xi)|^2 \imagpart \nu(\xi) \Bigr),
\label{def_tauzero}
\end{align}
where we associate $1/\tilde{\tau}_0 =0$ with $\tilde{\tau}_0 =+\infty$. 
Remark that the right-hand side of \eqref{def_tauzero} is always non-negative 
because $\imagpart \nu(\xi)=O(|\xi|^3)$ and 
$|\hat{\varphi}(\xi)|^2=O(|\xi|^{-4})$ as $|\xi|\to \infty$. 
In particular, we can easily check that $\tilde{\tau}_0=+\infty$ if 
$\imagpart \nu(\xi)\le 0$ for all  $\xi \in \R$. 
We note also that $\tilde{\tau}_0$ coincides with 
$\tau_0$ if \eqref{cond_gi} is satisfied. 
The main result of this paper is as follows: 

\begin{thm}\label{thm_main}
Assume that $\varphi \in H^3 \cap H^{2,1}(\R)$. 
Suppose that the nonlinear term $N$ satisfies \eqref{cond_weak}. 
Let $T_{\eps}$ be the supremum of $T>0$ such that \eqref{nls} admits a 
unique solution in $C([0,T);H^3 \cap H^{2,1}(\R))$. 
Then we have 
\[
 \liminf_{\eps \to +0} \eps^2 \log T_{\eps} 
\ge 
\tilde{\tau}_0,
\]
where $\tilde{\tau}_0 \in (0,+\infty]$ is given by \eqref{def_tauzero}. 
\end{thm}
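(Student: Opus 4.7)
The plan is to carry out a modified scattering analysis: reduce the PDE to an ODE for the Fourier profile, solve that leading ODE explicitly to build an approximate solution $u_a$ whose blow-up time is exactly $\exp(\tilde{\tau}_0/\eps^2)$, and then control the difference $w = u - u_a$ in $H^3 \cap H^{2,1}(\R)$ on any interval $[0,T_\delta]$ with $T_\delta = \exp((\tilde{\tau}_0 - \delta)/\eps^2)$ and arbitrary $\delta>0$. First, via the standard factorization
\[
 u(t,x) = \frac{1}{\sqrt{it}}\,e^{ix^2/(2t)}\,\alpha(t, x/t) + R(t,x),
\]
and the interplay between multiplication by $x/t$ and the operator $\op{J} = x + it\pa_x$, I would apply stationary phase to $N(u,\pa_x u)$ with the substitution $(u,\pa_x u)\mapsto (z, i\xi z)$. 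The non-gauge-invariant monomials carry non-stationary phases $e^{\pm 2ix^2/(2t)}$ and can be absorbed by a time-normal-form (integration by parts in $t$), which is precisely what \eqref{cond_weak} enables by excluding the derivative-free resonant triples $u^3,\ \cc{u}u^2,\ \cc{u}^3$; the contour integral defining $\nu(\xi)$ extracts exactly the surviving non-oscillating part. This yields
\[
 i\pa_t \alpha(t,\xi) = \frac{1}{t}\,\nu(\xi)\,|\alpha(t,\xi)|^2 \alpha(t,\xi) + \text{(short-range remainder)},
 \qquad \alpha(1,\xi)\simeq \eps\,\hat{\varphi}(\xi).
\]

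Second, I would solve the leading ODE exactly: writing $\rho = |\alpha_a|^2$ for the solution of the reduced equation, one has $\pa_t \rho = \tfrac{2}{t}\,\imagpart \nu(\xi)\,\rho^2$, hence
\[
 |\alpha_a(t,\xi)|^2 = \frac{\eps^2 |\hat{\varphi}(\xi)|^2}{1 - 2\,\eps^2 |\hat{\varphi}(\xi)|^2\,\imagpart \nu(\xi)\,\log t},
\]
with phase corrected by $-|\alpha_a|^2\,\realpart \nu(\xi)\,\log t$. This profile is smooth on $[1,\exp(\tilde{\tau}_0/\eps^2))$, with first blow-up realized at the $\xi$ that achieves the supremum in \eqref{def_tauzero}. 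Plugging $\alpha_a$ back through the factorization defines the approximate solution $u_a$ on that same interval, and provides the quantitative decay $\|u_a(t)\|_{L^\infty_x}\lesssim \eps\,\delta^{-1/2}(1+t)^{-1/2}$ uniformly on $[0,T_\delta]$.

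Third, I would set up a bootstrap on $w = u - u_a$. The equation for $w$ is of the same Schr\"odinger type, with cubic nonlinearity whose coefficients involve $u_a + w$ and with a forcing term equal to the short-range remainder discarded in Step 1 (a symbol of size $O(t^{-3/2-\kappa})$ for some $\kappa>0$). Energy estimates in $H^3$ combined with $\op{J}^2$-estimates in $L^2$ (which is equivalent to $H^{2,1}$ for the profile) close via Gronwall, producing an accumulated factor of the form $\exp\bigl(C\!\int_0^{T_\delta}\!\|u_a(t)\|_{L^\infty}^2 dt\bigr) = \exp\bigl(O(\eps^2 \delta^{-1}\log T_\delta)\bigr) = T_\delta^{O(1/\delta)}$, which is polynomial in $T_\delta$ and therefore beaten by the smallness $w(0)=0$ plus the integrable-in-time forcing. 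Letting $\delta\to 0$ gives $\liminf_{\eps\to+0}\eps^2\log T_\eps \ge \tilde{\tau}_0$.

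The main obstacle is the second bullet point in the introduction: unlike \cite{Su}, where $u_a$ was constructed in a higher-regularity/faster-decay space than the solution, here the target space for $w$ is the very same $H^3 \cap H^{2,1}(\R)$. The difficulty is twofold: (a) performing the time-normal-form on the non-gauge-invariant monomials without derivative losses, since those monomials include factors of $\pa_x u$ and normal-forming them produces nonlinear commutators that must be bounded by tame estimates; and (b) constructing $\alpha_a$ from $\hat{\varphi}\in H^{2,1}$ with just enough smoothness to place $u_a$ in $H^3 \cap H^{2,1}$ while keeping the sharp blow-up profile $\exp(\tilde{\tau}_0/\eps^2)$ intact. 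Getting both (a) and (b) to fit inside the same Gronwall argument — without any additional regularity hypothesis on $\varphi$ — is where the refinement of \cite{Su} really lives.
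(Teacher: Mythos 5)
Your plan reproduces the strategy of \cite{Su} --- build an approximate solution $u_a$ from the explicitly solvable profile ODE and run a Gronwall estimate on $w=u-u_a$ in $H^3\cap H^{2,1}$ --- and this is precisely the approach the theorem is designed to \emph{replace}, for the two reasons the introduction lists. The paper does not construct any approximate solution at the PDE level. Instead it proves a direct a priori bound on the solution itself: a bootstrap on the quantity $E(T)$ combining (i) weighted $L^2$ bounds on $u$, $\pa_x^3u$, $\op{J}u$, $\pa_x\op{J}\pa_x u$ obtained via the Hayashi--Naumkin decomposition $\op{J}N(u,u_x)=\op{L}(tP)+Q$ (Lemma \ref{lem_decomp}) and the identity $\op{J}\pa_x=\op{Z}+2it\op{L}$, and (ii) a pointwise bound on the Fourier profile $\alpha=\op{G}\op{U}^{-1}u$, for which the comparison with the unperturbed ODE is carried out only at the level of the scalar function $\beta(t,\xi)=\jb{\xi}^2\alpha(t,\xi)-V_1(t,\xi)$ (Lemma \ref{lem_ode}), never at the level of $u$ itself. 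This is what removes the extra regularity and decay assumptions on $\varphi$.

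Two concrete gaps remain in your argument as written. First, the derivative loss: the nonlinearity contains $|u_x|^2u_x$ and similar monomials, so $\pa_x^3 N(u,u_x)$ contains $q_1(u,u_x)\pa_x^4u$ and $q_2(u,u_x)\cc{\pa_x^4 u}$, and the ``energy estimates in $H^3$'' you invoke for $w$ do not close --- one derivative is lost, and no amount of Gronwall bookkeeping recovers it. The paper handles this with the smoothing operator $\op{S}_{\Phi}$ of Hayashi--Naumkin--Pipolo \cite{HNP} (Lemmas \ref{lem_smoothing} and \ref{lem_aux}), which absorbs the top-order terms into a positive commutator; your proposal contains no substitute for this device. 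Second, you explicitly defer the central difficulty --- constructing $u_a$ in $H^3\cap H^{2,1}$ from $\hat\varphi\in H^{2,1}$ alone while keeping $w$ estimable in the same space, and performing the time-normal-form on the non-gauge-invariant monomials with derivatives without losing regularity --- by saying this ``is where the refinement really lives.'' Identifying the obstacle is not the same as overcoming it; as stated, your proof would stall exactly where \cite{Su} stalls, i.e., it would only yield the result under stronger hypotheses on $\varphi$ (and, for the normal form on the $F$-part of $N$, you would still need something like the explicit splitting $F=\pa_xF_1+\tfrac{1}{it}F_2$ and the oscillatory decomposition of Lemma \ref{lem_decomp2} to control the remainders by $\|u\|_{H^3}+\|\op{J}u\|_{H^2}$ only).
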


We close this section with the contents of this paper: 
Section~\ref{sec_ode} is devoted to a lemma on some ordinary 
differential equation. 
In Section~\ref{sec_prelin}, we recall basic properties of the operators 
$\op{J}$ and $\op{Z}$, as well as the smoothing property of the linear 
Schr\"odinger equations. After that, we will get an a priori estimate in 
Section~\ref{sec_apriori}, and the main theorem will be proved 
in Section~\ref{sec_proof}. The proof of technical lemmas will be given in 
the appendix.

\section{A lemma on ODE}  \label{sec_ode}

In this section we introduce a lemma on some ordinary 
differential equation, keeping in mind an  application to \eqref{ode_profile} 
below. 

Let $\kappa$, $\theta_0:\R\to \C$ be continuous functions satisfying
\[
 \sup_{\xi\in \R} |\kappa(\xi)| <\infty, \quad 
 \sup_{\xi\in \R} |\theta_0(\xi)|<\infty,\quad 
 \sup_{\xi\in \R}\bigl( |\theta_0 (\xi)|^2 \imagpart\kappa(\xi) \bigr)\ge 0.
\]
We set $C_1=\sup_{\xi\in \R} |\kappa(\xi)|$ and 
define $\tau_1 \in (0,+\infty]$ by
\[
 \tau_1 
 =
\frac{1}{2 \sup_{\xi\in \R}\bigl(|\theta_0 (\xi)|^2 \imagpart\kappa(\xi) \bigr)},
\]
where $1/0$ is understood as $+\infty$. Let $\beta_0(t,\xi)$ be a solution to 
\begin{align}
 \left\{\begin{array}{l}
 \dis{i\pa_t \beta_0(t,\xi)=\frac{\kappa(\xi)}{t} |\beta_0
 (t,\xi)|^2 \beta_0(t,\xi),}\\[3mm]
 \beta_0(1,\xi) =\eps \theta_0(\xi),
 \end{array}\right.
\label{ode_unperturbed}
\end{align}
where $\eps >0$ is a parameter. Then it is easy to see that
\[
 |\beta_0(t,\xi)|^2 
 = 
 \frac{\eps^2 |\theta_0(\xi)|^2}
 {1-2\eps^2 |\theta_0(\xi)|^2 \imagpart \kappa(\xi) \log t}
\]
as long as the denominator is strictly positive. 
In view of this expression, we can see that
\begin{align}
 \sup_{(t,\xi)\in [1,e^{\sigma/\eps^2}]\times \R} |\beta_0(t,\xi)| 
 \le C_2 \eps 
\label{est_unperturbed}
\end{align}
for $\sigma \in (0,\tau_1)$, where
\[
 C_2=\frac{1}{\sqrt{1-\sigma/\tau_1}}\, \sup_{\xi \in \R} |\theta_0(\xi)|
 \ \ (<\infty),
\]
while 
\[
  \sup_{\xi \in \R} |\beta_0(t,\xi)| \to +\infty
  \ \  \mbox{as \ $t\to \exp(\tau_1/\eps^2)-0$}
\]
if $\tau_1<\infty$.

Next we consider a perturbation of \eqref{ode_unperturbed}. 
For this purpose, 
let $T>1$ and let $\theta_1:\R\to \C$, $\rho:[1,T)\times \R\to \C$ be 
continuous functions satisfying
\[
 \sup_{\xi \in \R} |\theta_1(\xi)| \le C_3 \eps^{1+\delta}, 
 \qquad
 \sup_{(t,\xi)\in [1,T)\times \R} t^{1+\mu} |\rho(t,\xi)| 
 \le 
 C_4\eps^{1+\delta}
\]
with some $C_3$, $C_4$, $\delta$, $\mu>0$. 
Let $\beta:[1,T)\times \R \to \C$ be a smooth function satisfying
\[
 \left\{\begin{array}{lc}
 \dis{i\pa_t \beta(t,\xi)
 =\frac{\kappa(\xi)}{t} |\beta(t,\xi)|^2 \beta(t,\xi) 
  +\rho(t,\xi),} 
 & (t,\xi)\in (1,T)\times \R, 
 \\[3mm]
 \beta(1,\xi) =\eps \theta_0(\xi) +\theta_1(\xi). &
 \end{array}\right.
\]
The following lemma asserts that an estimate similar to \eqref{est_unperturbed} remains valid if \eqref{ode_unperturbed} is perturbed by $\rho$ 
and $\theta_1$: 

\begin{lem} \label{lem_ode}
Let $\sigma \in (0,\tau_1)$. We set 
$T_*=\min\{ T, e^{\sigma/\eps^2}\}$. 
For $\eps \in (0, M^{-1/\delta}]$, we have 
\[
  \sup_{(t,\xi)\in [1,T_*)\times \R} |\beta(t,\xi)| 
 \le 
 C_2\eps +M\eps^{1+\delta},
\]
where
\[
 M= \left(2C_3 + \frac{C_4}{\mu} \right)e^{C_1(1+3C_2+3C_2^{2})\sigma}.
\]

\end{lem}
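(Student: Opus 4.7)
The plan is to introduce the difference $w(t,\xi) := \beta(t,\xi) - \beta_0(t,\xi)$; since $|\beta|\le |\beta_0|+|w| \le C_2\eps+|w|$ by \eqref{est_unperturbed}, it will suffice to prove $|w|\le M\eps^{1+\delta}$ on $[1,T_*)\times\R$. Subtracting the two ODEs and expanding via the identity $|\beta|^2\beta-|\beta_0|^2\beta_0 = (|\beta|^2+|\beta_0|^2)w + \beta_0\beta\cc{w}$ gives
\[
 i\pa_t w = \frac{\kappa(\xi)}{t}\bigl((|\beta|^2+|\beta_0|^2)w+\beta_0\beta\cc{w}\bigr) + \rho(t,\xi), \quad w(1,\xi)=\theta_1(\xi).
\]
Multiplying by $\cc{w}$ and taking real parts (and working with $|w|^2$ to bypass non-smoothness at the zeros of $|w|$), I would extract the differential inequality
\[
 \pa_t|w| \le \frac{|\kappa(\xi)|}{t}\bigl(|\beta|^2+|\beta_0|^2+|\beta_0||\beta|\bigr)|w| + |\rho(t,\xi)|.
\]

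Next I would run a continuity/bootstrap argument under the hypothesis $|\beta(t,\xi)|\le (C_2+1)\eps$ on $[1,T^{**}]\times \R$. This specific choice is crucial: combined with $|\beta_0|\le C_2\eps$, it yields
\[
 |\beta|^2+|\beta_0|^2+|\beta_0||\beta| \le \bigl((C_2+1)^2 + C_2^2 + C_2(C_2+1)\bigr)\eps^2 = (1+3C_2+3C_2^2)\eps^2,
\]
which is precisely the combinatorial constant appearing in the exponent of $M$. Setting $A:=C_1(1+3C_2+3C_2^2)$ and applying Gronwall with integrating factor $t^{-A\eps^2}$, using $|w(1,\xi)|\le C_3\eps^{1+\delta}$ and $|\rho(s,\xi)|\le C_4\eps^{1+\delta}s^{-1-\mu}$, produces
\[
 |w(t,\xi)| \le t^{A\eps^2}\Bigl(C_3 + \frac{C_4}{A\eps^2+\mu}\Bigr)\eps^{1+\delta} \le e^{A\sigma}\Bigl(C_3+\frac{C_4}{\mu}\Bigr)\eps^{1+\delta}
\]
on $[1,T^{**}]\cap[1,e^{\sigma/\eps^2}]$, after carrying out the elementary integral $\int_1^t s^{-A\eps^2-1-\mu}ds \le 1/(A\eps^2+\mu)$.

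To close the bootstrap I would note that $\eps\le M^{-1/\delta}$ forces $M\eps^\delta\le 1$, so
\[
 |\beta(t,\xi)| \le C_2\eps + |w(t,\xi)| \le C_2\eps + \frac{C_3+C_4/\mu}{2C_3+C_4/\mu}\,M\eps^{1+\delta} < (C_2+1)\eps,
\]
a strict improvement of the bootstrap hypothesis. A standard continuity/connectedness argument then extends the bound to all of $[1,T_*)\times\R$, yielding the claimed estimate via the triangle inequality.

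The main obstacle I anticipate is threading the algebraic constants precisely rather than the analysis itself: the exponent $1+3C_2+3C_2^2$ emerges only from the specific bootstrap hypothesis $|\beta|\le (C_2+1)\eps$ (a looser bound like $2C_2\eps$ would produce a different, worse coefficient), and the factor of $2$ in front of $C_3$ inside $M$ is essential because it provides exactly the slack needed for the improvement step to be strict without imposing auxiliary positivity on $C_3$ or $C_4$. Once this bookkeeping is aligned, the Gronwall-plus-continuity machinery is routine.
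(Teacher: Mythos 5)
Your proof is correct and follows essentially the same route as the paper: both compare $\beta$ with $\beta_0$ via $w=\beta-\beta_0$, run a continuity/bootstrap argument, and close it with a Gronwall estimate yielding the same constant $C_1(1+3C_2+3C_2^2)$ in the exponent (the paper bootstraps on $|w|\le M\eps^{1+\delta}$ rather than on $|\beta|\le (C_2+1)\eps$, but these are equivalent since $M\eps^{\delta}\le 1$). The only minor quibble is your closing remark: the strictness of the improvement step does rely on $C_3>0$, which is guaranteed by the hypotheses.
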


\begin{proof}
We put $w(t,\xi)=\beta(t,\xi)-\beta_0(t,\xi)$ and
\[
 T_{**} =\sup \Bigl\{ \tilde{T} \in [1,T_{*})\, \Bigm|\, 
 \sup_{(t,\xi) \in [1,\tilde{T})\times \R} |w(t,\xi)| \le M\eps^{1+\delta} 
 \Bigr\}.
\]
Note that $T_{**} >1$, because of the estimate 
\[
 |w(1,\xi)| 
 =
 |\theta_1(\xi)| 
 \le 
 C_3 \eps^{1+\delta} 
 \le 
 \frac{M}{2}\eps^{1+\delta}
\] 
and the continuity of $w$. Since $w$ satisfies
\[
 i\pa_t w=\frac{\kappa(\xi)}{t}
 \Bigl(
  |w+\beta_0|^2(w+\beta_0) -|\beta_0|^2\beta_0
 \Bigr) 
+\rho,
\]
we see that 
\begin{align*}
 \pa_t|w|^2
 &=
 2\imagpart \Bigl(\cc{w} \cdot i\pa_t w \Bigr)\\
 &\le
 \frac{2}{t}
 C_1 \Bigl( M^2\eps^{2+2\delta} +3 C_2 M\eps^{2+\delta} + 3C_2^2 \eps^2
 \Bigr)  |w|^2
 +|w||\rho|\\
 &\le 
 \frac{2}{t} \tilde{C}\eps^2 |w|^2 +\frac{C_4\eps^{1+\delta}}{t^{1+\mu}}|w|
 \end{align*}
for $t \in [1,T_{**})$, where $\tilde{C}=C_1(1+3C_2+3C_2^2)$. 
By the Gronwall-type argument, we obtain 
\begin{align*}
 |w(t,\xi)| 
 &\le 
 \left(|\theta_1(\xi)| 
 + 
 \int_{1}^{\infty} \frac{C_4\eps^{1+\delta}}{2s^{1+\mu+\tilde{C}\eps^2}} ds
 \right) e^{\tilde{C}\eps^2 \log t}\\
 &\le
 \left(
 C_3 \eps^{1+\delta} + \frac{C_4 \eps^{1+\delta}}{2(\mu+\tilde{C}\eps^2)}
 \right) e^{\tilde{C} \sigma}\\
 &\le
 \frac{M}{2} \eps^{1+\delta}
\end{align*}
for $(t,\xi)\in [1,T_{**})\times\R$. 
This contradicts the definition of $T_{**}$ if $T_{**} <T_{*}$. 
Therefore we conclude $T_{**} =T_*$. In other words, we have 
\[
 \sup_{(t,\xi) \in [1,T_*)\times \R} |w(t,\xi)| \le M\eps^{1+\delta},
\]
whence
\[
 |\beta(t,\xi)|
 \le 
 |\beta_0(t,\xi)|+|w(t,\xi)| 
 \le
 C_2 \eps +M \eps^{1+\delta} 
\]
for $(t,\xi)\in [1,T_*)\times\R$. This completes the proof.
\end{proof}

\section{Preliminaries related to the Schr\"odinger operator}  
\label{sec_prelin}

This section is devoted to preliminaries related to the operator 
$\op{L}=i\pa_t +\frac{1}{2}\pa_x^2$. In what follows, 
we denote several positive constants by $C$, which may vary 
from one line to another.

\subsection{The operators $\op{J}$ and $\op{Z}$}  \label{sec_vec}

We introduce $\op{J}=x+it\pa_x$ and $\op{Z}=x\pa_x +2t\pa_t$, 
which have good compatibility with $\op{L}$. The following relations 
will be used repeatedly in the subsequent sections: 
\[
 [\pa_x, \op{J}]=1,\quad
 [\op{L}, \op{J}]=0,\quad
 [\pa_x, \op{Z}]=\pa_x, \quad
 [\op{L},\op{Z}]=2\op{L},
\]
where $[\cdot, \cdot]$ stands for the commutator of two linear operators. 
Another important relation is 
\begin{align}
 \op{J}\pa_x =\op{Z} +2it\op{L},
 \label{id_J_and_Z}
\end{align}
which will play the key role in our analysis. 
Next we set 
\[
 (\op{U}(t) \phi)(x)
 =
 e^{i\frac{t}{2}\pa_x^2} \phi(x)
 =
 \frac{e^{-i\frac{\pi}{4}}}{\sqrt{2\pi t}} 
 \int_{\R} e^{i\frac{(x-y)^2}{2t}} \phi(y) dy
\]
for $t> 0$. 
We will occasionally abbreviate $\op{U}(t)$ to $\op{U}$ 
if it causes no confusion. 
Also we introduce 
\[
 (\op{G} \phi)(\xi)
 =
 e^{-i\frac{\pi}{4}} \hat{\phi}(\xi)
 =
 \frac{e^{-i\frac{\pi}{4}}}{\sqrt{2\pi}} 
 \int_{\R} e^{-iy\xi} \phi(y) dy.
\]
The following lemma is well-known 
(see the series of papers by Hayashi and Naumkin 
\cite{HN002}--\cite{HN008} for the proof):

\begin{lem} \label{lem_pointwise}
We have
\begin{align*}
\|v\|_{L^{\infty}} 
\le t^{-1/2} \left\| \op{G}\op{U}^{-1} v\right\|_{L^{\infty}} 
+Ct^{-3/4}(\|v\|_{L^2} +\|\op{J} v\|_{L^2})
\end{align*}
for $t>0$. 
\end{lem}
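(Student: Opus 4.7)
The plan is to exploit the standard MDFM factorization of the free Schr\"odinger propagator and then control the error from commuting the inner multiplier past the Fourier transform. Expanding $(x-y)^2 = x^2-2xy+y^2$ in the integral kernel of $\op{U}(t)$, one obtains the identity
\[
 \op{U}(t)\phi(x)
 =
 t^{-1/2} e^{ix^2/(2t)}\, \op{G}\bigl[ e^{iy^2/(2t)}\phi(y)\bigr](x/t).
\]
Setting $w = \op{G}\op{U}^{-1}v$ and applying this to $\phi = \op{G}^{-1}w$, I would write
\[
 v(x)
 =
 t^{-1/2} e^{ix^2/(2t)}\, w(x/t)
 +
 t^{-1/2} e^{ix^2/(2t)}\, \op{G}\bigl[(e^{iy^2/(2t)}-1)\op{G}^{-1}w\bigr](x/t).
\]
The first term delivers exactly $t^{-1/2}\|\op{G}\op{U}^{-1}v\|_{L^\infty}$, so everything reduces to estimating the remainder.

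For the remainder, I would apply the trivial Hausdorff--Young bound $\|\op{G}f\|_{L^\infty} \le C\|f\|_{L^1}$, so the task becomes to bound $\|(e^{iy^2/(2t)}-1)\op{G}^{-1}w\|_{L^1}$ by an $L^2$-type quantity. Here the key quantitative input is the two-sided estimate $|e^{i\theta}-1|\le \min(|\theta|,2)$. I would split the integration at $|y|=\sqrt{t}$: on $|y|\le\sqrt{t}$ use $|e^{iy^2/(2t)}-1|\le y^2/(2t)$ combined with Cauchy--Schwarz to produce $Ct^{-1/4}\|y\op{G}^{-1}w\|_{L^2}$, and on $|y|>\sqrt{t}$ use the crude bound by $2$ combined with the weight $\jb{y}^{-1}$ and Cauchy--Schwarz to produce $Ct^{-1/4}(\|\op{G}^{-1}w\|_{L^2}+\|y\op{G}^{-1}w\|_{L^2})$. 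Altogether this yields a $t^{-1/4}$ gain, which when multiplied by the overall $t^{-1/2}$ prefactor gives the desired $t^{-3/4}$.

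Finally, I would translate the two $L^2$-norms back to the quantities in the statement. Since $\op{G}$ and $\op{U}$ are $L^2$-isometries, $\|\op{G}^{-1}w\|_{L^2}=\|v\|_{L^2}$. For the weighted norm, I would verify the operator identity $\op{J}\op{U}=\op{U}(y\,\cdot\,)$ (a one-line calculation from the kernel, using $x\,e^{i(x-y)^2/(2t)} + it \pa_x e^{i(x-y)^2/(2t)} = y\, e^{i(x-y)^2/(2t)}$), from which $\|y\op{G}^{-1}w\|_{L^2}=\|\op{U}^{-1}\op{J}v\|_{L^2}=\|\op{J}v\|_{L^2}$ follows at once. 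Combining these with the remainder estimate gives the lemma.

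The main technical obstacle is obtaining the precise $t^{-3/4}$ decay rate for the error term: the naive bound $|e^{iy^2/(2t)}-1|\le y^2/(2t)$ on its own is not integrable against a mere $L^2$ function, while the cruder bound by $2$ loses too much. Striking the right balance requires both the cutoff at the natural scale $|y|\sim\sqrt{t}$ and the insertion of the $\jb{y}^{-1}$ weight in the Cauchy--Schwarz step for the outer region, so that one power of $y$ is available to control the tail of the $L^2$ mass.
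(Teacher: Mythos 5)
Your proof is correct and is essentially the standard argument: the paper itself does not prove this lemma (it cites the Hayashi--Naumkin papers), but your factorization $v=\op{M}\op{D}\op{V}\op{G}\op{U}^{-1}v$ with the error term $\op{G}\bigl[(e^{iy^2/(2t)}-1)\op{G}^{-1}w\bigr]$ is exactly the mechanism the paper deploys in its appendix, where the same $t^{-1/4}$ gain appears as the estimate \eqref{est_W} for $\op{V}-1$. Your splitting at $|y|\sim\sqrt t$ with the $\jb{y}^{-1}$ weight in the outer region, together with the identities $\|\op{G}^{-1}w\|_{L^2}=\|v\|_{L^2}$ and $\|y\,\op{U}^{-1}v\|_{L^2}=\|\op{J}v\|_{L^2}$, checks out.
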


\subsection{Smoothing property}  \label{sec_smoothing}
In this subsection, we recall smoothing properties of the linear Schr\"odinger 
equations, which will be used effectively in Step 3 of 
\S \ref{sec_L2_est}. 
Among various kinds of smoothing properties, we will follow the approach of 
\cite{HNP}. Let $\op{H}$ be the Hilbert transform, that is, 
\[
\op{H}v(x):=\frac{1}{\pi} \mathrm{p.v.} \int_{\R} \frac{v(y)}{x-y}\,dy.
\]
With a non-negative weight function $\Phi(x)$, let us define the 
operator $\op{S}_{\Phi}$ by 
\[
\op{S}_{\Phi}v(x)
:=
\left\{ \cosh\left(\int_{-\infty}^{x} \Phi (y)\,dy\right)\right\}v(x)
-i\left\{ \sinh\left(\int_{-\infty}^{x} \Phi (y)\,dy\right)\right\}\op{H} v(x).
\]
Note that $\op{S}_{\Phi}$ is $L^2$-automorphism and both 
$\|\op{S}_{\Phi}\|_{L^2\to L^2}$ and $\|\op{S}_{\Phi}^{-1}\|_{L^2\to L^2}$ are 
dominated by $C\exp(\|\Phi\|_{L^1})$. 
The following two lemmas enable us to get rid of the derivative loss 
coming from the nonlinear term:

\begin{lem}\label{lem_smoothing}
Let $v=v(t,x)$ and $\psi=\psi(t,x)$ be $\C$-valued smooth functions. We set 
$\Phi (t,x)=\eta (|\psi|^2+|\psi_x|^2)$ with $\eta \geq 1$. 
Then there exists the constant $C$, which is independent of $\eta$, such that
\begin{align*}
 & \frac{d}{dt}\|\op{S}_{\Phi} v(t)\|_{L^2}^2
  +\left\|\sqrt{\Phi (t)}\op{S}_{\Phi}|\pa_x|^{1/2}v(t)\right\|_{L^2}^2 \\
 &\le 
  Ce^{C\eta \|\psi (t) \|_{H^1}^2} 
 \left(
   \eta \|\psi(t)\|_{W^{2,\infty}}^2 +\eta ^3\|\psi(t)\|_{W^{1,\infty}}^6 
   +\eta \|\psi(t)\|_{H^1} \|\op{L} \psi(t)\|_{H^1}
 \right)
 \|v(t)\|_{L^2}^2\\
 &\quad +
 2\left| 
  \jbf{\op{S}_{\Phi}v(t), \op{S}_{\Phi}\op{L} v(t)}_{L^2}
 \right|,
\end{align*}
where we denote by $W^{k,\infty}$ the $L^{\infty}$-based Sobolev 
space of order $k$.
\end{lem}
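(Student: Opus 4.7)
The plan is to derive the estimate through a direct energy computation on $\|\op{S}_\Phi v\|_{L^2}^2$, exploiting the fact that $\op{S}_\Phi$ is algebraically designed so that, when paired with $\op{S}_\Phi v$, the antisymmetric part $(i/2)\pa_x^2$ of $\op{L}$ produces a positive smoothing quantity weighted by $\Phi$. First I would differentiate in $t$ and substitute $\pa_t v = -i\op{L} v + \tfrac{i}{2}\pa_x^2 v$, splitting the right-hand side into three pieces: the $\op{L} v$ contribution, kept as $2|\langle \op{S}_\Phi v, \op{S}_\Phi \op{L} v\rangle_{L^2}|$; the smoothing piece from $(i/2)\pa_x^2 v$; and the piece $2\realpart\langle \op{S}_\Phi v, (\pa_t \op{S}_\Phi)v\rangle_{L^2}$ coming from the time dependence of the weight.

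The central step is the smoothing piece. Setting $A(t,x) := \int_{-\infty}^x \Phi(t,y)\,dy$ so that $\pa_x A = \Phi$, a direct calculation gives $[\pa_x, \op{S}_\Phi] = \Phi\,\widetilde{\op{S}}_\Phi$ with $\widetilde{\op{S}}_\Phi := \sinh(A) - i\cosh(A)\op{H}$, and iterating this together with the identity $\pa_x \op{H} = |\pa_x|$ expresses $\op{S}_\Phi \pa_x^2$ modulo multiplication operators whose $L^2$-norms are dominated by $e^{C\|\Phi\|_{L^1}} \le e^{C\eta\|\psi\|_{H^1}^2}$. I would then pair with $\op{S}_\Phi v$, integrate by parts to distribute half a derivative on each factor, and take real parts so that the skew-adjoint contributions cancel; what survives at top order is precisely $\|\sqrt{\Phi}\,\op{S}_\Phi|\pa_x|^{1/2}v\|_{L^2}^2$, which can be moved to the left-hand side. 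The lower-order remainders involve $\pa_x\Phi$ and $\Phi^2$, controlled pointwise by $C\eta\|\psi\|_{W^{1,\infty}}\|\psi\|_{W^{2,\infty}}$ and $C\eta^2\|\psi\|_{W^{1,\infty}}^4$ respectively, and together with the $L^2$ bound on $\op{S}_\Phi$ yield the $\eta\|\psi\|_{W^{2,\infty}}^2$ and $\eta^3\|\psi\|_{W^{1,\infty}}^6$ contributions on the right-hand side.

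For the time-derivative piece $2\realpart\langle \op{S}_\Phi v,(\pa_t\op{S}_\Phi)v\rangle$, I would use $\pa_t\Phi = 2\eta\realpart(\cc{\psi}\pa_t\psi + \cc{\psi_x}\pa_t\psi_x)$, then substitute $\pa_t\psi = -i\op{L}\psi + \tfrac{i}{2}\pa_x^2\psi$ and integrate by parts in $x$ on the $\pa_x^2\psi$ piece to avoid a loss of two derivatives; this supplies the $\eta\|\psi\|_{H^1}\|\op{L}\psi\|_{H^1}$ summand, together with further $\|\psi\|_{W^{2,\infty}}^2$-type contributions that merge with those already extracted. The hard part will be the algebraic bookkeeping in the previous paragraph: demonstrating that among the many commutator terms produced when moving $\op{S}_\Phi$ through $\pa_x^2$, the unique uncompensated top-order contribution is the positive $\|\sqrt{\Phi}\,\op{S}_\Phi|\pa_x|^{1/2}v\|_{L^2}^2$, with every $\cosh A$--$\sinh A$ cross term either cancelling by real-part symmetry or absorbable as a bounded multiplication operator via the exponential prefactor. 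The remainder of the argument is then a routine product-rule and Sobolev-embedding exercise.
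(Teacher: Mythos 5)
The paper offers no proof of this lemma---it is quoted from Hayashi--Naumkin--Pipolo \cite{HNP} (Section 2; see also the appendix of \cite{LS})---and your sketch follows essentially the strategy of that cited proof: the energy identity for $\|\op{S}_{\Phi}v\|_{L^2}^2$, the commutator identity $[\pa_x,\op{S}_{\Phi}]=\Phi\,\widetilde{\op{S}}_{\Phi}$ combined with $\pa_x\op{H}=|\pa_x|$ to extract the smoothing term, and substitution of the equation into $\pa_t\Phi$ followed by an integration by parts on the half-line $(-\infty,x]$ to produce the $\eta\|\psi\|_{H^1}\|\op{L}\psi\|_{H^1}$ contribution. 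The outline and the error bookkeeping are consistent with the stated right-hand side (in particular, intermediate $\eta^2\|\psi\|_{W^{1,\infty}}^4$-type remainders are harmless, being the geometric mean of the $\eta\|\psi\|_{W^{2,\infty}}^2$ and $\eta^3\|\psi\|_{W^{1,\infty}}^6$ terms), with the one caveat that the favorable sign of the surviving top-order term---the entire reason for the $\cosh$/$\sinh$/$\op{H}$ structure of $\op{S}_{\Phi}$---is asserted rather than verified, and that verification is where most of the actual work of the cited proof lies.
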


\begin{lem}\label{lem_aux}
Let $v=v(x)$ and $\psi=\psi(x)$ be $\C$-valued smooth functions. 
Suppose that $q_1$ and $q_2$ are quadratic homogeneous polynomials in 
$(\psi,\cc{\psi},\psi_x,\cc{\psi_x})$. 
We set $\Phi (x)=\eta (|\psi|^2+|\psi_x|^2)$ with $\eta \geq 1$. 
Then there exists the constant $C$, which is independent of $\eta$, such that
\begin{align*}
 & \left| 
     \jbf{\op{S}_{\Phi}v, \op{S}_{\Phi}q_1(\psi,\psi_x) \pa_x v}_{L^2}
   \right|
   +
   \left| 
    \jbf{\op{S}_{\Phi}v, \op{S}_{\Phi}q_2(\psi,\psi_x) \cc{\pa_x v}}_{L^2}
   \right| \\
 &\le 
  C\eta^{-1} e^{C\eta \|\psi\|_{H^1}^2} 
    \left\|\sqrt{\Phi} \op{S}_{\Phi}|\pa_x|^{1/2}v \right\|_{L^2}^2\\
 &\quad +
  C e^{C\eta \|\psi\|_{H^1}^2} 
    \left(
      1+\eta^2 \|\psi\|_{H^1}^4 + \eta^2 \|\psi\|_{W^{1,\infty}}^4  
    \right) \|\psi\|_{W^{2,\infty}}^2 \|v\|_{L^2}^2.
\end{align*}
\end{lem}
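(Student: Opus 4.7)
The plan is to control the pairings by isolating the derivative loss in $q_j\pa_x v$ through a systematic commutator expansion, and then absorb the dangerous piece into the smoothing norm $\|\sqrt{\Phi}\op{S}_\Phi|\pa_x|^{1/2}v\|_{L^2}^2$ by exploiting the pointwise bound $|q_j(\psi,\psi_x)|\le C(|\psi|^2+|\psi_x|^2)=C\eta^{-1}\Phi$, which is the arithmetic source of the sharp factor $\eta^{-1}$ in the estimate. I would work out the $q_1\pa_x v$ term in detail; the $q_2 \cc{\pa_x v}$ case is structurally identical up to complex conjugation.

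First I would decompose
\[
 \op{S}_\Phi(q_1\pa_x v) = q_1 \pa_x \op{S}_\Phi v + [\op{S}_\Phi, q_1]\pa_x v + q_1 [\op{S}_\Phi, \pa_x] v.
\]
The last commutator is explicit: since $\pa_x \int_{-\infty}^{x}\Phi = \Phi$, a direct computation gives $[\op{S}_\Phi, \pa_x]v = -\Phi\cdot \op{S}^\sharp v$, where $\op{S}^\sharp$ is of the same form as $\op{S}_\Phi$ (with $\sinh$ and $\cosh$ exchanged) and is therefore $L^2$-bounded by $Ce^{C\|\Phi\|_{L^1}}=Ce^{C\eta\|\psi\|_{H^1}^2}$. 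The middle commutator reduces, via the definition of $\op{S}_\Phi$, to the Calder\'on commutator $[\op{H},q_1]\pa_x$, which is $L^2$-bounded with norm controlled by $\|\pa_x q_1\|_{L^\infty}\lesssim \|\psi\|_{W^{1,\infty}}\|\psi\|_{W^{2,\infty}}$. Pairing these two terms against $\op{S}_\Phi v$ and applying Cauchy--Schwarz produces contributions that feed directly into the second (lower-order) term of the claim, after using $\|\Phi\|_{L^\infty}=\eta\|\psi\|_{W^{1,\infty}}^2$ to absorb the $q_1\Phi$-factor.

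The essential piece is $\langle \op{S}_\Phi v, q_1 \pa_x \op{S}_\Phi v\rangle$. Here I would exploit the 1D identity $\pa_x = \pm \op{H}|\pa_x|$ and the self-adjointness of $|\pa_x|^{1/2}$ to rewrite
\[
 \langle \op{S}_\Phi v, q_1\pa_x\op{S}_\Phi v\rangle
 = \pm\langle |\pa_x|^{1/2}\op{S}_\Phi v,\, q_1\op{H}|\pa_x|^{1/2}\op{S}_\Phi v\rangle + R,
\]
where $R$ collects commutators of $|\pa_x|^{1/2}$ with $q_1\op{H}$ controlled by Kato--Ponce and fractional Leibniz estimates in terms of $\|q_1\|_{H^1\cap W^{1,\infty}}\lesssim \|\psi\|_{W^{2,\infty}}^2$. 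Using $\op{H}$-boundedness on $L^2$ and $|q_1|\le C\eta^{-1}\Phi$, Cauchy--Schwarz on the main term produces exactly $C\eta^{-1}\|\sqrt{\Phi}|\pa_x|^{1/2}\op{S}_\Phi v\|_{L^2}^2$, while the remainder $R$, together with the analogous commutator $[\sqrt{\Phi},|\pa_x|^{1/2}]$ needed to bring $\op{S}_\Phi$ inside the smoothing norm as in the claim, is bounded by standard fractional commutator estimates and yields the quartic factor $(1+\eta^2\|\psi\|_{H^1}^4+\eta^2\|\psi\|_{W^{1,\infty}}^4)\|\psi\|_{W^{2,\infty}}^2\|v\|_{L^2}^2$ (the quartic powers arising from products $\|\Phi\|_{L^1}\|\Phi\|_{L^\infty}$ and $\|\Phi\|_{L^\infty}^2$).

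The main obstacle is bookkeeping the $\eta$-powers. One must verify that the leading term picks up exactly the factor $\eta^{-1}$ (essential for the closing argument in Section \ref{sec_apriori}), whereas each commutator introduces an exponential $e^{C\eta\|\psi\|_{H^1}^2}$ from the $\sinh/\cosh$ growth of $\op{S}_\Phi$ but no additional positive powers of $\eta$. A further subtlety is that swapping $\op{S}_\Phi$ through $|\pa_x|^{1/2}$ inside the smoothing norm---so that the right-hand side really involves $\|\sqrt{\Phi}\op{S}_\Phi|\pa_x|^{1/2}v\|_{L^2}^2$ and not $\|\sqrt{\Phi}|\pa_x|^{1/2}\op{S}_\Phi v\|_{L^2}^2$---requires one more commutator bound $[\op{S}_\Phi,|\pa_x|^{1/2}]$, which is the technical heart of the argument and is treated by the same fractional commutator estimates, with the resulting error again absorbed into the quartic lower-order term.
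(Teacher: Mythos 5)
First, a point of reference: the paper does not prove Lemma~\ref{lem_aux} at all --- it states it and defers to Section~2 of \cite{HNP} and the appendix of \cite{LS}. So there is no internal proof to compare against; your proposal is a reconstruction of the cited argument, and in outline it does follow the same architecture (peel off $[\op{S}_{\Phi},q_1]\pa_x$ as a Calder\'on commutator with the $\sinh$ factor giving the exponential, use the explicit commutator $[\op{S}_{\Phi},\pa_x]$, and extract the factor $\eta^{-1}$ from the pointwise bound $|q_j|\le C\eta^{-1}\Phi$).

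There is, however, a genuine gap at exactly the step you call the main term. From
\[
 \pm\bigl\langle |\pa_x|^{1/2}\op{S}_{\Phi}v,\ q_1\op{H}|\pa_x|^{1/2}\op{S}_{\Phi}v\bigr\rangle ,
\]
Cauchy--Schwarz together with $|q_1|\le C\eta^{-1}\Phi$ does \emph{not} ``produce exactly $C\eta^{-1}\|\sqrt{\Phi}\,|\pa_x|^{1/2}\op{S}_{\Phi}v\|_{L^2}^2$'': it produces $\|\sqrt{|q_1|}\,|\pa_x|^{1/2}\op{S}_{\Phi}v\|_{L^2}\cdot\|\sqrt{|q_1|}\,\op{H}|\pa_x|^{1/2}\op{S}_{\Phi}v\|_{L^2}$, and the Hilbert transform sitting inside the second factor does not commute with the weight. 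Commuting it out costs a commutator $[\sqrt{|q_1|},\op{H}]$ whose $L^2$ bound is only of size $\|q_1\|_{L^\infty}^{1/2}$, and the resulting error is proportional to the \emph{unweighted} norm $\||\pa_x|^{1/2}\op{S}_{\Phi}v\|_{L^2}$. The same unweighted half-derivative norm is produced by your Kato--Ponce remainder $R$ and by the final commutator $[\op{S}_{\Phi},|\pa_x|^{1/2}]$. This quantity appears nowhere on the right-hand side of the lemma and is not controlled by $\|v\|_{L^2}$ (that would require half a derivative of $v$, which the lemma deliberately avoids assuming), so it cannot simply be ``absorbed into the quartic lower-order term'' as asserted. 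Arranging the commutator expansion so that every half-derivative of $v$ either carries the weight $\sqrt{\Phi}$ or is traded entirely for $\|v\|_{L^2}$ is precisely the nontrivial content of the computation in \cite{HNP} and \cite{LS}; your proposal names this difficulty but does not resolve it. A secondary issue of the same flavour: the swap from $\|\sqrt{\Phi}\,|\pa_x|^{1/2}\op{S}_{\Phi}v\|_{L^2}$ to $\|\sqrt{\Phi}\,\op{S}_{\Phi}|\pa_x|^{1/2}v\|_{L^2}$ costs factors involving $\|\Phi\|_{W^{1,\infty}}\lesssim\eta\|\psi\|_{W^{1,\infty}}\|\psi\|_{W^{2,\infty}}$, and one must verify that the resulting powers of $\eta$ stay within the $\eta^2$ budget of the stated bound; this bookkeeping is flagged in your proposal but not carried out, and it is not automatic.
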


For the proof, see Section 2 in \cite{HNP} 
(see also the appendix of \cite{LS}).

\section{A priori estimate}  \label{sec_apriori}

Throughout this section, we fix $\sigma \in (0,\tilde{\tau}_0)$ and 
$T \in (0,e^{\sigma/\eps^2}]$, 
where $\tilde{\tau}_0$ is defined by \eqref{def_tauzero}. 
Let $u \in C([0,T);H^3\cap H^{2,1})$ 
be a solution to \eqref{nls} for $t\in [0,T)$, and we set
$\alpha(t,\xi)=\op{G} \bigl[\op{U}(t)^{-1}u(t,\cdot)\bigr](\xi)$, 
where $\op{G}$ and $\op{U}$ are given in Section \ref{sec_prelin}.
We also put
\[
 E(T)=\sup_{t\in [0,T)} \Bigl(
 (1+t)^{-\gamma}(\|u(t)\|_{H^3} +\|\op{J}u(t)\|_{H^2}) +
  \sup_{\xi \in \R} (\jb{\xi}^2|\alpha(t,\xi)|)
 \Bigr)
\]
with $\gamma\in (0,1/12)$. The goal of this section is to prove the following: 

\begin{lem} \label{lem_apriori}
Assume that $N$ satisfies \eqref{cond_weak}. 
Let $\sigma$, $T$ and $\gamma$ be as above. 
Then there exist positive constants $\eps_0$ and $K$, not depending on $T$, 
such that 
\begin{align}
  E(T)\le \eps^{2/3} 
\label{assump}
\end{align}
implies
\[
 E(T)\le K\eps,
\]
provided that $\eps \in (0,\eps_0].$
\end{lem}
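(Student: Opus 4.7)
The proof is a bootstrap/continuity argument: assuming \eqref{assump} on a maximal subinterval of $[0,T)$, I would improve each of the three quantities inside $E(T)$ to size $K\eps$ with $K$ independent of $T$ and of $\eps\in(0,\eps_0]$. It is convenient to work on $t\in[1,T)$ after handling $[0,1]$ by standard local theory.

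\textbf{Pointwise control of the profile.} Using $\alpha=\op{G}\op{U}(t)^{-1}u$ and $\op{L}u=N(u,u_x)$, differentiation yields $i\pa_t\alpha(t,\xi)=\op{G}\op{U}(t)^{-1}N(u,u_x)(\xi)$. Extracting the pointwise asymptotics $u(t,x)\approx t^{-1/2}\alpha(t,x/t)\,e^{ix^2/(2t)}$ and $u_x(t,x)\approx i(x/t)\,t^{-1/2}\alpha(t,x/t)\,e^{ix^2/(2t)}$ via Lemma \ref{lem_pointwise} (the error being of order $t^{-3/4+\gamma}\eps^{2/3}$ under \eqref{assump}), each monomial $u^a\bar u^b u_x^c\bar{u_x}^d$ with $a+b+c+d=3$ in $N$ carries a phase $e^{i(a+c-b-d)x^2/(2t)}$. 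Non-stationary phases ($a+c-b-d\ne 1$) are eliminated by integration by parts in $x$, gaining a factor $t^{-\mu}$; the stationary piece is precisely what the contour integral defining $\nu(\xi)$ extracts. Under \eqref{cond_weak} the worst non-resonant terms $u^3$, $\bar u^2 u$, $\bar u^3$ are forbidden, so all remaining non-gauge-invariant monomials contain at least one $u_x$ or $\bar{u_x}$, supplying the $\xi/t$ gain needed for the integration by parts. This yields
\[
 i\pa_t\alpha=\frac{\nu(\xi)}{t}\,|\alpha|^2\alpha+\rho(t,\xi),\qquad |\rho(t,\xi)|\le C\eps^{1+\delta}\,t^{-1-\mu}
\]
for some $\delta,\mu>0$. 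Applying Lemma \ref{lem_ode} pointwise in $\xi$ with $\kappa(\xi)=\nu(\xi)$ and $\theta_0(\xi)=\alpha(1,\xi)/\eps$, so that $\tau_1$ coincides with $\tilde{\tau}_0$ in view of \eqref{def_tauzero}, and multiplying through by $\jb{\xi}^2$, I conclude $\sup_{\xi\in\R}\jb{\xi}^2|\alpha(t,\xi)|\le C\eps$ on $[1,T)$ as long as $T\le e^{\sigma/\eps^2}$.

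\textbf{Sobolev estimates.} Lemma \ref{lem_pointwise} together with the profile bound above gives $\|u(t)\|_{W^{2,\infty}}\le C\eps\,t^{-1/2}+O(t^{-3/4+\gamma}\eps^{2/3})$. For $\|u(t)\|_{H^3}$, I apply $L^2$-energy estimates to $\pa_x^j u$ for $j=0,\ldots,3$; the top-order derivative loss caused by $\pa_x u$ inside $N$ is absorbed by conjugating with the smoothing operator $\op{S}_\Phi$ of Lemmas \ref{lem_smoothing}--\ref{lem_aux} with weight $\Phi=\eta(|u|^2+|u_x|^2)$ for a sufficiently large constant $\eta$. The right-hand sides in those lemmas are controlled by the $W^{2,\infty}$-bound just noted, leading to
\[
 \frac{d}{dt}\|u(t)\|_{H^3}^2\le \frac{C\eps^2}{1+t}\,\|u(t)\|_{H^3}^2+(\text{lower order terms}),
\]
hence $\|u(t)\|_{H^3}\le C\eps(1+t)^{C\eps^2}\le \eps(1+t)^\gamma$ once $\eps$ is small enough. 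The analogous scheme applied to $\pa_x^j\op{J}u$ for $j=0,1,2$ works for the $\op{J}u$ part, provided one first rewrites the problematic term $\op{J}\pa_x u$ arising from $\op{J}N(u,u_x)$ via the key identity \eqref{id_J_and_Z}, namely $\op{J}\pa_x u=\op{Z}u+2it\,N(u,u_x)$, so that only $\pa_x^k\op{J}u$ for $k\le 2$ and lower-order spatial derivatives of $u$ appear on the right-hand side. This yields $\|\op{J}u(t)\|_{H^2}\le C\eps(1+t)^\gamma$ and closes the bootstrap.

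\textbf{Main obstacle.} The most delicate point is the derivation of the ODE $i\pa_t\alpha=t^{-1}\nu(\xi)|\alpha|^2\alpha+\rho$ with a remainder integrable in $t$ beyond the resonant scale, under only \eqref{cond_weak} rather than full gauge invariance \eqref{cond_gi}. This is precisely where the present argument refines \cite{Su}: the integration-by-parts that removes non-resonant monomials effectively costs one derivative on $\alpha$, but this loss is tolerable because every surviving non-gauge-invariant cubic monomial is of derivative type, as observed in \cite{HN2}. Careful bookkeeping of the resulting error terms, combined with the bootstrap assumption \eqref{assump} and the restriction $\gamma<1/12$, is what produces positive $\delta$ and $\mu$ above and ultimately matches $\tilde{\tau}_0$ from \eqref{def_tauzero} with the ODE blow-up threshold in Lemma \ref{lem_ode}.
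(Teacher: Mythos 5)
Your overall strategy coincides with the paper's (bootstrap argument, profile ODE closed by Lemma \ref{lem_ode}, energy estimates with the smoothing operator $\op{S}_{\Phi}$, and the identity \eqref{id_J_and_Z}), but the two most delicate steps are carried out incorrectly or not at all. First, you apply Lemma \ref{lem_ode} with $\kappa(\xi)=\nu(\xi)$ and $\theta_0(\xi)=\alpha(1,\xi)/\eps$. This fails on three counts: $\nu$ is a polynomial of degree up to $3$ (see \eqref{nu_explicit}), so the hypothesis $\sup_{\xi}|\kappa(\xi)|<\infty$ of Lemma \ref{lem_ode} is violated; the quantity appearing in $E(T)$ is $\sup_{\xi}\jb{\xi}^2|\alpha|$, and ``multiplying through by $\jb{\xi}^2$'' a bound on $\sup_{\xi}|\alpha|$ does not produce it; and taking $\theta_0=\alpha(1,\cdot)/\eps$ makes $\tau_1$ an $\eps$-dependent quantity that need not equal $\tilde{\tau}_0$, which is precisely the constant the theorem is about. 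The paper instead applies the lemma to $\beta=\jb{\xi}^2\alpha-V_1$ with the bounded symbol $\kappa=\jb{\xi}^{-4}\nu$ and $\theta_0=\jb{\xi}^2\hat{\varphi}$, so that $|\theta_0|^2\imagpart\kappa=|\hat{\varphi}|^2\imagpart\nu$ and hence $\tau_1=\tilde{\tau}_0$ exactly, the $O(\eps^2)$ discrepancy between $\beta(1,\cdot)$ and $\eps\jb{\xi}^2\hat{\varphi}$ being relegated to $\theta_1$. Second, your claim that the non-resonant monomials contribute a remainder of size $\eps^{1+\delta}t^{-1-\mu}$ after ``integration by parts in $x$'' is too quick: in the decomposition of Lemma \ref{lem_decomp2} these terms are of size $\eps^2/t$, i.e.\ borderline non-integrable, and the extra decay is extracted from the time oscillation $e^{i\omega t\xi^2}$ together with the prefactor $\xi$ supplied by \eqref{cond_weak}, via $\sup_{\xi}|\xi/(1+i\omega t\xi^2)|\le Ct^{-1/2}$. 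This integration by parts in $t$ produces a boundary term $V_1$ of size $\eps^2t^{-1/2}$, which is not a negligible remainder and must be absorbed into the unknown (hence $\beta=\jb{\xi}^2\alpha-V_1$) before Lemma \ref{lem_ode} can be invoked. Without this normal-form step the ODE argument does not close.

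A smaller but still substantive gap concerns the estimate of $\|\op{J}u\|_{L^2}$. The difficulty there is not derivative counting but the absence of a Leibniz rule for $\op{J}$ on non-gauge-invariant monomials: applied naively, $\op{J}$ acting on, say, $\cc{u}^2u_x$ produces terms whose $L^2$ norm is of order $\eps^2$ with no time decay, so the time integral grows linearly in $t$. The cure is Lemma \ref{lem_decomp}: using \eqref{cond_weak} one writes the non-gauge-invariant part of $N$ as $\pa_xF_1+\frac{1}{it}F_2$, applies \eqref{id_J_and_Z} to $\op{J}\pa_xF_1=(\op{Z}+2it\op{L})F_1$, and absorbs the term $2it\op{L}F_1$ into the left-hand side by running the energy estimate on $\op{J}u-tP$ rather than on $\op{J}u$. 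Your sketch invokes \eqref{id_J_and_Z} only to rewrite $\op{J}\pa_xu$ itself, which does not address this growth.
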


We divide the proof of this lemma into two subsections. 
We remark that many parts of the proof below 
are similar to that of Section 3 in 
\cite{HN2}, although we need modifications to fit for our purpose.

\subsection{$L^2$-estimates} \label{sec_L2_est}
In this part, we consider the bound for $\|u(t)\|_{H^3} + \|\op{J}u(t)\|_{H^2}$. By virtue of the inequality
\begin{align}
 \|u(t)\|_{H^3} + \|\op{J}u(t)\|_{H^2}
 \le 
 C\bigl(
    \|u(t)\|_{L^2} + \|\pa_x^3 u(t)\|_{L^2} 
  + \|\op{J} u(t)\|_{L^2}  + \|\pa_x \op{J} \pa_x u(t)\|_{L^2}
 \bigr),
\label{est_L_two_0}
\end{align}
it suffices to show that each term in the right-hand side can be dominated by 
$C\eps (1+t)^{\gamma}$. 
We are going to estimate these four terms by separate ways. 
\\

\noindent \underline{\textbf{Step 1}:\  Estimate for $\|u(t)\|_{L^2}$.} 
First we remark  that  \eqref{assump} yields
\[
 \|u(t)\|_{W^{2,\infty}} \le \frac{C\eps^{2/3}}{(1+t)^{1/2}}
\]
for $t \in [0,T)$. Indeed, the Sobolev embedding 
$H^{1}(\R)\hookrightarrow L^{\infty}(\R)$ yields 
\[
\|u(t)\|_{W^{2,\infty}} \le C\|u(t)\|_{H^3} \le C\eps^{2/3}
\] 
for $t\le 1$, while it follows from Lemma \ref{lem_pointwise} that 
\[
 \|u(t)\|_{W^{2,\infty}} 
 \le 
 \frac{C}{t^{1/2}}  \sup_{\xi \in \R}\bigl( \jb{\xi}^2 |\alpha(t,\xi)|\bigr)
 +
 \frac{C}{t^{3/4}} \bigl( \|u(t)\|_{H^2} + \|\op{J} u(t)\|_{H^2} \bigr)
 \le
 \frac{C\eps^{2/3}}{t^{1/2}}
\]
for $t\in [1,T)$. 
Now, by the standard energy method, we have 
\[
 \frac{d}{dt}\|u(t)\|_{L^2} \le 
 \|N(u,u_x)\|_{L^2}
\le
 C\|u(t)\|_{W^{1,\infty}}^2 \|u(t)\|_{H^1}
 \le
 \frac{C\eps^2}{(1+t)^{1-\gamma}},
\]
whence
\begin{align}
\label{est_L_two_1}
 \|u(t)\|_{L^2} 
 \le& 
 \|u(0)\|_{L^2} +\int_{0}^{t} \frac{C\eps^2}{(1+\tau)^{1-\gamma}}d\tau
 \\
 \le&
 C\eps + C\eps^2 (1+t)^{\gamma}
 \nonumber\\
 \le &
 C\eps (1+t)^{\gamma}
  \nonumber
\end{align}
for $t\in [0,T)$. 
\\

\noindent \underline{\textbf{Step 2}:\   Estimate for $\|\op{J}u(t)\|_{L^2}$.} 
If $t\le 1$, there is no difficulty because we do not have to pay attentions 
to possible growth in $t$. Indeed, since  
\[
 \|\op{J}N(u(t),u_x(t))\|_{L^2}
 \le
 C(1+t) \|u\|_{W^{1,\infty}}^2 (\|\op{J}u\|_{H^1} +\|u\|_{H^1})
 \le
 C\eps^2,
\]
we have
\begin{align*}
 \|\op{J}u(t)\| 
 \le 
 C\|xu(0)\|_{L^2} + \int_0^{1} \|\op{J}N(u(\tau),u_x(\tau))\|_{L^2}d\tau
 \le
 C\eps +C\eps^2
\end{align*}
for $t\le 1$. 
To consider the case of $t\ge 1$, let us first recall a 
remarkable lemma due to Hayashi--Naumkin~\cite{HN2}: 
\begin{lem} \label{lem_decomp}
Assume that $N$ satisfies \eqref{cond_weak}. 
Then the following decomposition holds: 
\[
 \op{J}N(u,u_x)=\op{L}(tP) +Q,
\]
where $P$ is a cubic homogeneous polynomial in $(u,\cc{u},u_x, \cc{u_x})$, 
and $Q$ satisfies
\[
 \|Q\|_{L^2} 
 \le
 C\|u\|_{W^{2,\infty}}^2(\|u\|_{H^1}+\|\op{J}u\|_{H^1} +\|\op{Z} u\|_{H^1})
 +
 \frac{C}{t}\|u\|_{W^{2,\infty}} (\|\op{J}u\|_{H^2}+\|u\|_{H^1})^2
\]
for $t\ge 1$. 
\end{lem}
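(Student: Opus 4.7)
The plan is to expand $N$ as a finite sum of cubic monomials and apply $\op{J}$ term by term, exploiting two algebraic identities: the Leibniz-type rule $\op{J}(fg) = (\op{J}f)g + itfg_x$ (immediate from $\op{J} = x + it\pa_x$), and the commutation formula \eqref{id_J_and_Z}.

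First, I would write $N(u, u_x) = \sum_{\lambda} c_{\lambda} m_{\lambda}$, where each $m_{\lambda}$ is a cubic monomial in $(u, \cc{u}, u_x, \cc{u}_x)$. Hypothesis \eqref{cond_weak} forces every derivative-free monomial in the sum to be gauge-invariant (of the form $u^2\cc{u}$), so every non-gauge-invariant $m_\lambda$ carries at least one derivative. Repeatedly applying the Leibniz rule produces a sum of two types of terms: (i) monomials with exactly one factor of the form $\op{J}u$ (or the conjugate analogue $x\cc{u} + it\cc{u}_x$) and two other factors controllable in $L^\infty$; and (ii) ``flux'' terms of the shape $it\cdot m'$, where $m'$ is a quartic monomial in which at least one factor has been differentiated. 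For type (i), applying \eqref{id_J_and_Z} to convert any $\op{J}\pa_x u$ into $\op{Z}u + 2it\,\op{L}u = \op{Z}u + 2itN$ rewrites the derivative-of-$\op{J}u$ contributions as $\op{Z}u$ factors (controlled via $\|\op{Z}u\|_{H^1}$) plus quintic remainders absorbed into $Q$.

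The crux of the argument is the handling of type-(ii) terms. For each flux monomial $it\cdot m'$, I would construct a companion cubic polynomial $p$ such that $\op{L}(tp)$ reproduces $it\cdot m'$ modulo admissible remainders; summing the $p$'s over $\lambda$ yields the polynomial $P$ of the statement. The construction is driven by $\op{L}(tp) = ip + t\,\op{L}p$ together with the substitution $i\pa_t u = -\tfrac{1}{2}\pa_x^2 u + N$, which converts the time derivatives appearing in $\op{L}p$ into spatial derivatives plus quintic contributions. Matching charges, this reduces to a finite linear-algebra problem on cubic monomials; the essential point is that \eqref{cond_weak} guarantees no resonant obstruction, since the potentially obstructing derivative-free monomials $u^3$, $u\cc{u}^2$, $\cc{u}^3$ are excluded from the outset.

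Finally, the remainder $Q := \op{J}N - \op{L}(tP)$ consists of the type-(i) terms together with quintic pieces carrying a prefactor $t$. The type-(i) contributions immediately produce the first summand of the claimed bound via H\"older, placing two factors in $L^\infty$ and the $\op{J}u$- or $\op{Z}u$-factor in $L^2$. To extract the $1/t$-gain in the second summand, I would invoke the identity $u_x = -\tfrac{i}{t}(\op{J}u - xu)$ to trade one derivative factor in each quintic remainder for a $\op{J}u$ factor at the price of $t^{-1}$, with the resulting $xu$-term handled via $\|\op{J}u\|_{H^2}$-type norms. The main obstacle is the algebraic bookkeeping in the monomial-by-monomial construction of $P$; once $P$ is identified, the $L^2$-estimate of $Q$ is a routine H\"older-Sobolev computation.
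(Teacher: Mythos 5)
Your overall architecture (Leibniz rule for $\op{J}$, conversion of $\op{J}\pa_x u$ into $\op{Z}u+2itN$ via \eqref{id_J_and_Z}, construction of $P$ so that $\op{L}(tP)$ absorbs the $t$-growing terms) is in the right spirit, but the central step --- matching each flux monomial $it\,m'$ with $\op{L}(tp)$ for some cubic $p$, modulo remainders that are only cubic-without-extra-derivatives or quintic --- does not go through, and this is not mere bookkeeping. Writing $\op{L}(tp)=ip+t\op{L}p$ and substituting the equation, the cubic principal part of $\op{L}(g_1g_2g_3)$ is $\sum_{j\ \mathrm{barred}}(\pa_x^2 g_j)\prod_{k\ne j}g_k+\sum_{j<k}(\pa_x g_j)(\pa_x g_k)g_l$. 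Test this on $m=uu_x^2$: with your grouping the flux is $it(u_x^3+uu_xu_{xx})$, while $\op{L}(u^2u_x)$ produces $u_x^3+2uu_xu_{xx}$ and no other cubic $p$ contributes to these two monomials without creating new unmatched ones --- the required proportionality fails. Worse, for $m=u\cc{u_x}^2$ the flux contains $u\cc{u_x}\cc{u_{xx}}$, and every candidate $p$ generating that term (necessarily containing a factor $\cc{u_x}$) also generates $u\cc{u}\cc{u_{xxx}}$, a third-derivative monomial that can appear neither in any flux term nor in your admissible remainders. So the ``finite linear-algebra problem'' is genuinely obstructed, and \eqref{cond_weak} is not what removes the obstruction.

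What is missing is the step the paper performs \emph{before} applying $\op{J}$: using the identities \eqref{id_key1}--\eqref{id_key2}, each non-gauge-invariant monomial (which by \eqref{cond_weak} carries at least one derivative) is rewritten as $\pa_x(\text{cubic})+\tfrac{1}{it}(\text{cubic with one }\op{J}\text{-factor})$, i.e.\ $F=\pa_xF_1+\tfrac{1}{it}F_2$; only then is $\op{J}\pa_x=\op{Z}+2it\op{L}$ applied to the exact-derivative part, giving $P=2iF_1$ and $Q=(\op{Z}+2)F_1+\tfrac{1}{it}\op{J}F_2+\op{J}G$ (the third-derivative terms hidden in $\op{L}(tP)$ cancel identically against $\op{J}\pa_xF_1$ and never need to be estimated). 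In your language, the matching works only after (a) a forced, non-obvious choice of which factor receives the $\op{J}$ in each monomial and (b) the inclusion of correction terms carrying $\op{J}$-factors with a $\tfrac{1}{it}$ prefactor; it is $\op{J}$ applied to those corrections (producing two $\op{J}$-factors) that yields the second summand $\tfrac{C}{t}\|u\|_{W^{2,\infty}}(\|\op{J}u\|_{H^2}+\|u\|_{H^1})^2$. Note that this bound is trilinear, not quintic, so your attribution of the $1/t$ gain to quintic remainders via $u_x=-\tfrac{i}{t}(\op{J}u-xu)$ cannot be correct; moreover that substitution leaves an $xu$ factor, and $\|xu\|_{L^2}\le\|\op{J}u\|_{L^2}+t\|u_x\|_{L^2}$ reintroduces the factor of $t$ you just removed. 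A related gap: for conjugated leading factors one must use $\cc{\op{J}u}=x\cc{u}-it\cc{u_x}$ rather than $x\cc{u}+it\cc{u_x}=\cc{\op{J}u}+2it\cc{u_x}$, since the extra $2it\cc{u_x}$ is another non-decaying term; arranging its cancellation is exactly the role of the specific form of \eqref{id_key2}.
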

For the convenience of the readers, we shall give a sketch of the proof 
in the appendix.  
Now we are going to apply this lemma. Let $t\in [1,T)$. 
Since the above decomposition allows us to rewrite the original equation 
as 
\[
 \op{L}(\op{J}u-tP)=Q,
\]
the standard energy method gives us 
\[
 \|\op{J}u(t)-tP\|_{L^2} 
 \le 
 C(\eps+\eps^2) + \int_{1}^{t} \|Q(\tau)\|_{L^2}d\tau.
\]
By the relation \eqref{id_J_and_Z}, we have 
\[
 \|\op{Z} u\|_{H^1}
 \le 
 \|\op{J}\pa_x u\|_{H^1} + 2t\|N(u, u_x)\|_{H^1}
 \le 
 C\eps (1+t)^{\gamma} +C\eps^2 t (1+t)^{-1+\gamma}
 \le 
 C\eps (1+t)^{\gamma},
 \]
which leads to 
\[
 \|Q(t)\|_{L^2} \le  \frac{C\eps^2 }{(1+t)^{1-\gamma}}.
\]
Also we have
\[
 \|P\|_{L^2} 
\le 
C\|u\|_{W^{1,\infty}}^2 \|u\|_{H^1} \le \frac{C\eps^2}{(1+t)^{1-\gamma}}.
\]
Summing up, we have
\begin{align}
 \|\op{J}u(t)\|_{L^2} 
 \le 
 t\|P\|_{L^2} +\|\op{J}u(t)-tP\|_{L^2}
 \le 
  C\eps (1+t)^{\gamma}
\label{est_L_two_2}
\end{align}
for $t \in [1,T)$. 
\\

\noindent \underline{\textbf{Step 3}:\ \  
Estimate for $\|\pa_x^3u(t)\|_{L^2}$}. 
We apply Lemma \ref{lem_smoothing}
with $v=\pa_x^3 u$, $\psi=u$ and $\eta=\eps^{-2/3}$. Then we obtain
\begin{align*}
 \frac{d}{dt} \|\op{S}_{\Phi} \pa_x^3 u(t)\|_{L^2}^2
 +
 \Bigl\|\sqrt{\Phi} \op{S}_{\Phi} |\pa_x|^{1/2} \pa_x^3 u \Bigr\|_{L^2}^2
 &\le
 C B(t) \|\pa_x^3 u \|_{L^2}^2
 +
 2\left| 
 \jbf{\op{S}_{\Phi}\pa_x^3 u, \op{S}_{\Phi}\pa_x^3 N(u,u_x)}_{L^2}
 \right|,
\end{align*}
where
\[
 B(t)=e^{C\eps^{-2/3} \|u\|_{H^1}^2} 
 \left(
   \eps^{-2/3} \|u(t)\|_{W^{2,\infty}}^2 +\eps^{-2} \|u(t)\|_{W^{1,\infty}}^6 
   +\eps^{-2/3} \|u(t)\|_{H^1} \|N(u,u_x)\|_{H^1}\right).
\]
Since \eqref{assump} yields
\[
 \|u(t)\|_{H^1} 
 \le 
 C\|\alpha(t)\|_{H^{0,1}}
 \le 
 C\left(\int_{\R}\frac{d\xi}{\jb{\xi}^2}\right)^{1/2} 
 \sup_{\xi \in \R}\bigl( \jb{\xi}^2 |\alpha(t,\xi)|\bigr)
\le C\eps^{2/3},
\]
we see that $B(t)$ can be dominated by $C\eps^{2/3} (1+t)^{-1}$. 
Also we observe that the usual Leibniz rule leads to
\[
 \pa_x^3 N(u,u_x)
 =
q_1(u,u_x) \pa_x(\pa_x^3 u) + q_2(u,u_x) \pa_x(\cc{\pa_x ^3 u}) + \rho_1, 
\]
where $q_1$, $q_2$ are defined by 
\begin{align}
 q_1(z,\zeta)=\frac{\pa N}{\pa \zeta}(z,\zeta), \qquad 
 q_2(z,\zeta)=\frac{\pa N}{\pa \cc{\zeta}}(z,\zeta),
\label{def_q}
\end{align}
and $\rho_1$ satisfies 
\[
 \|\rho_1\|_{L^2}\le C\|u\|_{W^{2,\infty}}^2 \|u\|_{H^3}. 
\]
By Lemma \ref{lem_aux}, we have
\begin{align*}
 &\left| 
 \jbf{\op{S}_{\Phi}\pa_x^3 u, \op{S}_{\Phi}\pa_x^3 N(u,u_x)}_{L^2}
 \right|\\
 &\le 
 \bigl| \langle
   \op{S}_{\Phi}\pa_x^3 u, \op{S}_{\Phi}q_1(u,u_x) \pa_x(\pa_x^3 u)
 \rangle_{L^2} \bigr|
 + 
 \bigl| \langle
    \op{S}_{\Phi}\pa_x^3 u, \op{S}_{\Phi}q_2(u,u_x) \pa_x(\cc{\pa_x ^3 u})
 \rangle_{L^2} \bigr|
 + 
 C\|\op{S}_{\Phi}\pa_x^3 u\|_{L^2}\|\op{S}_{\Phi}\rho_1\|_{L^2}\\
 &\le
 C\eps^{2/3} e^{C\eps^{-2/3} \|u\|_{H^1}^2} 
 \Bigl\|\sqrt{\Phi} \op{S}_{\Phi}|\pa_x|^{1/2}\pa_x^3 u \Bigr\|_{L^2}^2
 \\
 &\quad 
 +
 C e^{C\eps^{-2/3} \|u\|_{H^1}^2} 
 \left(
  1+ \eps^{-4/3} \|u\|_{H^1}^4 + \eps^{-4/3}\|u\|_{W^{1,\infty}}^4
 \right)
 \|u\|_{W^{2,\infty}}^2
 \|\pa_x^3u\|_{L^2}^2 \\
 &\quad 
 +
  Ce^{C\eps^{-2/3} \|u\|_{H^1}^2}\|u\|_{W^{2,\infty}}^2 \|u\|_{H^3}^2\\
 &\le
 C_0 \eps^{2/3}
 \Bigl\|\sqrt{\Phi} \op{S}_{\Phi} |\pa_x|^{1/2} \pa_x^3 u \Bigr\|_{L^2}^2
 +
 \frac{C\eps^{8/3}}{(1+t)^{1-2\gamma}}
\end{align*}
with some positive constant $C_0$ not depending on $\eps$. 
Piecing the above estimates all together, we obtain
\begin{align}
 \frac{d}{dt} \|\op{S}_{\Phi} \pa_x^3 u(t)\|_{L^2}^2
 \le
 \bigl( 2C_0 \eps^{2/3}-1 \bigr)
 \Bigl\|\sqrt{\Phi} \op{S}_{\Phi} |\pa_x|^{1/2} \pa_x^3 u \Bigr\|_{L^2}^2
 +
 \frac{C(\eps^{2}+\eps^{8/3})}{(1+t)^{1-2\gamma}}
\le
 \frac{C\eps^2}{(1+t)^{1-2\gamma}},
\label{est_pa3}
\end{align}
provided that $\eps \le (2C_0)^{-3/2}$. 
Integrating with respect to $t$, we have 
\[
 \|\op{S}_{\Phi} \pa_x^3 u(t)\|_{L^2}^2 
 \le 
 Ce^{C\eps^{2/3}} \eps^2 +  C \eps^{2}(1+t)^{2\gamma}
 \le 
 C \eps^{2}(1+t)^{2\gamma}, 
\]
whence
\begin{align}
 \|\pa_x^3 u(t)\|_{L^2} 
 \le 
 e^{C\eps^{-2/3} \|u(t)\|_{H^1}^{2}}
 \|\op{S}_{\Phi} \pa_x^3 u(t)\|_{L^2} 
 \le 
 C \eps (1+t)^{\gamma}
\label{est_L_two_3}
\end{align}
for $t \in [0,T)$. 
\\

\noindent 
\underline{\textbf{Step 4}:\ Estimate for $\|\pa_x \op{J} \pa_x u(t)\|_{L^2}$}. By using the commutation relation $[\op{L},\pa_x\op{Z}]=2\pa_x \op{L}$ 
and the Leibniz rule for $\op{Z}$, we have
\[
 \op{L}\pa_x \op{Z} u
 =
q_1(u,u_x) \pa_x(\pa_x \op{Z} u) + q_2(u,u_x) \pa_x(\cc{\pa_x \op{Z} u}) 
+ \rho_2, 
\]
where $q_1$, $q_2$ are given by \eqref{def_q}, and $\rho_2$ satisfies
\[
 \|\rho_2\|_{L^2}
 \le 
 C\|u\|_{W^{2,\infty}}^2 (\|u\|_{H^2} + \|\op{Z} u\|_{H^2}).
\]
Since the relation \eqref{id_J_and_Z} leads to 
\[
 \|\op{Z} u\|_{H^2}
 \le 
 \|\op{J}\pa_x u\|_{H^2} + 2t\|N(u,u_x)\|_{H^2}
 \le 
 C\eps (1+t)^{\gamma},
 \]
we see that 
\[
 \|\rho_2\|_{L^2}
 \le 
 \frac{C\eps^2}{(1+t)^{1-\gamma}}. 
\]
Thus, as in the derivation of \eqref{est_pa3}, we have
\[
 \frac{d}{dt}\|\op{S}_{\Phi} \pa_x \op{Z} u(t)\|_{L^2}^2 
 \le 
 \frac{C\eps^2}{(1+t)^{1-2\gamma}}, 
\]
which yields
\begin{align*}
 \|\pa_x \op{Z} u(t)\|_{L^2} 
 \le 
 C \eps (1+t)^{\gamma}. 
\end{align*}
Finally, by using the relation \eqref{id_J_and_Z} again, we obtain
\begin{align}
\label{est_L_two_4}
 \|\pa_x \op{J}\pa_x u(t)\|_{L^2} 
 \le & 
 \| \pa_x\op{Z} u\|_{L^2}+ 2t\|\pa_x N(u,u_x) \|_{L^2} 
 \\
 \le & 
 C \eps (1+t)^{\gamma} 
 + 2t \frac{C\eps^2}{(1+t)^{1-\gamma}}
 \nonumber\\
\le  &
  C \eps (1+t)^{\gamma}
 \nonumber
\end{align}
for $t \in[0,T)$. 
\\

\noindent \underline{\textbf{Final step}}. 
Substituting \eqref{est_L_two_1}, \eqref{est_L_two_2}, \eqref{est_L_two_3}, 
\eqref{est_L_two_4} into \eqref{est_L_two_0}, we arrive at the desired estimate
\[
 \|u(t)\|_{H^3} + \|\op{J}u(t)\|_{H^2} 
 \le C\eps (1+t)^{\gamma} 
\] 
for $t \in [0,T)$.

\subsection{Estimates for $\alpha$}\label{sec_Linfty_est} 

In this part, we will show 
$\jb{\xi}^2 |\alpha(t,\xi)| \le C\eps$ 
for $(t,\xi) \in [0,T) \times \R$ under the assumption \eqref{assump}. 
If $t \le 1$, the Sobolev embedding yields this estimate immediately. 
Hence we may assume $T>1$ and $t \in [1,T)$ in what follows. 

Now let us introduce a useful lemma, which is due to Hayashi--Naumkin 
\cite{HN2} though the expression is slightly different. 
We write $\alpha_{\omega}(t,\xi)=\alpha(t,\xi/\omega)$ 
for $\omega \in \R \backslash\{0\}$.

\begin{lem} \label{lem_decomp2}
Assume that $N$ satisfies \eqref{cond_weak}. Then, for $l\in \{0,1,2\}$, 
the following decomposition holds: 
\begin{align}
\label{id_key}
\op{G}  \op{U}^{-1} \pa_x^l N(u,u_x) 
=&
\frac{(i\xi)^l \nu(\xi)}{t} |\alpha|^2 \alpha 
+
\frac{\xi e^{i\frac{t}{3}\xi^2}}{t} \mu_{1,l}(\xi) \alpha_3^3 
\\
&+
\frac{\xi  e^{i\frac{2t}{3}\xi^2} }{t} \mu_{2,l}(\xi) 
 \bigl(\cc{\alpha_{-3}}\bigr)^3
+
\frac{\xi e^{it\xi^2}}{t} \mu_{3,l}(\xi) |\alpha_{-1}|^2 \cc{\alpha_{-1}}  
+
R_l, 
\nonumber
\end{align}
where $\mu_{1,l}(\xi)$, $\mu_{2,l}(\xi)$, $\mu_{3,l}(\xi)$ are polynomials in $\xi$ of order 
at most $2+l$, and $R_l(t,\xi)$ satisfies
\[
 \sum_{l=0}^{2}\|R_l(t)\|_{L^{\infty}} 
 \le
 \frac{ C}{t^{5/4}} \bigl(\|u(t)\|_{H^3}+\|\op{J}u(t)\|_{H^2} \bigr)^3
\]
for $t\ge 1$.
\end{lem}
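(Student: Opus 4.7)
The plan is to reduce the computation to a Gaussian integral by exploiting the factorization $\op{U}(t)=\op{M}(t)\op{D}(t)\op{G}\op{M}(t)$, where $\op{M}(t)f(x)=e^{ix^2/(2t)}f(x)$ and $\op{D}(t)f(x)=t^{-1/2}f(x/t)$, so that $\op{G}\op{U}(t)^{-1}=\op{G}\op{M}(-t)\op{G}^{-1}\op{D}(t)^{-1}\op{M}(-t)$. First I would expand $\pa_x^l N(u,u_x)$ by the Leibniz rule into a sum of cubic monomials in $\pa_x^{k_j}u$ and $\pa_x^{k_j}\cc{u}$, and treat each separately; this is preferable to the commutation shortcut $\op{G}\op{U}^{-1}\pa_x=i\xi\op{G}\op{U}^{-1}$, which would multiply the $l=0$ remainder by $\xi^l$ and spoil its $L^\infty$-bound. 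Using the asymptotics
\[
 u(t,x)\simeq \frac{e^{ix^2/(2t)-i\pi/4}}{\sqrt t}\,\alpha(t,x/t), \qquad \pa_x^k u(t,x)\simeq (ix/t)^k u(t,x),
\]
(the second obtained by iterating the identity $it\pa_x u=\op{J}u-xu$), with errors controlled by $t^{-3/4}(\|u\|_{L^2}+\|\op{J}u\|_{L^2})$, $t^{-1}\|\op{J}u\|_{H^s}$, and their higher analogs, each monomial reduces, modulo remainder, to
\[
 \frac{e^{i(a-b)x^2/(2t)}}{t^{3/2}}\,c_{a,b}(x/t)\,\alpha(t,x/t)^a\,\cc{\alpha(t,x/t)}^b,
\]
where $a+b=3$ records the numbers of $u$'s and $\cc{u}$'s, and $c_{a,b}(\xi)$ is a polynomial of degree $\le l+1$ collecting the $(ix/t)$-factors that arose from every differentiation.

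Next I would apply $\op{G}\op{U}^{-1}$ to this expression. The inner operator $\op{D}(t)^{-1}\op{M}(-t)$ converts it into $t^{-1} c_{a,b}(\eta)\,e^{i(a-b-1)t\eta^2/2}\,\alpha(\eta)^a\,\cc{\alpha(\eta)}^b$ in the rescaled variable $\eta$, and the remaining $\op{G}\op{M}(-t)\op{G}^{-1}$ produces a Gaussian integral whose total phase in $\eta$ has a unique stationary point $\eta^{\ast}=\xi/m$, with $m=a-b\in\{3,1,-1,-3\}$, and critical value $t\xi^2(m-1)/(2m)$. A standard stationary-phase evaluation then reproduces precisely the four terms in the statement: the resonant case $m=1$ produces $\frac{\nu(\xi)}{t}|\alpha|^2\alpha$, with $\nu(\xi)$ matching the contour-integral definition in the introduction by the residue formula for the coefficient of $z$; the cases $m=3,-1,-3$ give, respectively, the oscillations $e^{it\xi^2/3}$, $e^{it\xi^2}$, $e^{2it\xi^2/3}$ multiplying the scaled evaluations $\alpha_3^3$, $|\alpha_{-1}|^2\cc{\alpha_{-1}}$, $(\cc{\alpha_{-3}})^3$. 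Condition \eqref{cond_weak} says exactly that $c_{a,b}(0)=0$ for $m\neq 1$ (at $\xi=0$ only $|z|^2z$ survives in $N(z,i\xi z)$), so in every non-resonant case one can extract a single factor of $\xi$ in front, leaving $\mu_{j,l}(\xi)$ of degree at most $l+2$.

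The main obstacle will be controlling $\sum_{l=0}^{2}\|R_l(t)\|_{L^\infty}$ by the claimed $Ct^{-5/4}(\|u\|_{H^3}+\|\op{J}u\|_{H^2})^3$. Errors originate from three sources: the $L^\infty$-defect between $u$ and its WKB profile, the approximation $\pa_x^k u\simeq (ix/t)^k u$, and the finite-$t$ discrepancy of $\op{G}\op{M}(-t)\op{G}^{-1}$ from the identity together with the non-stationary remainder of the Gaussian integral. The first two are handled by Lemma~\ref{lem_pointwise} applied in turn to $u$, $\op{J}u$ and $\pa_x\op{J}u$, which is where the $H^3\cap H^{2,1}$-regularity is needed; the third is controlled by a $\xi$-derivative argument producing an extra $t^{-1/4}$ gain over the leading $t^{-1}$. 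Carefully bookkeeping these three contributions through the cubic product and through the successive derivatives of the stationary-phase expansion is the technical heart of the proof and yields the required $t^{-5/4}$ bound.
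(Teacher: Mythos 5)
Your architecture is essentially the paper's: factor $\op{U}(t)=\op{M}(t)\op{D}(t)\op{G}\op{M}(t)$, sort the cubic monomials by $m=a-b\in\{3,1,-1,-3\}$, evaluate the resulting Gaussian exactly (your stationary point $\xi/m$ and critical phase $t\xi^2(m-1)/(2m)$ are precisely the operators $\op{E}^{(m-1)/m}(t)\op{D}_{m}$ of Lemma~\ref{lem_tech1}), identify the resonant coefficient with the contour integral $\nu(\xi)$, and use \eqref{cond_weak} to pull one factor of $\xi$ out of each non-resonant coefficient. (A small bookkeeping slip: the pre-factorization coefficient $c_{a,b}$ has degree up to $3+l$, not $l+1$; your conclusion $\deg \mu_{j,l}\le 2+l$ is unaffected.)

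The genuine gap is in the remainder estimate, specifically your error source (a). You propose to replace each factor of the cubic monomial by its WKB profile in the $x$-variable, measure the defect in $L^\infty_x$ via Lemma~\ref{lem_pointwise} (size $t^{-3/4}(\|u\|_{L^2}+\|\op{J}u\|_{L^2})$), and then push the error through $\op{G}\op{U}^{-1}$. This cannot reach $t^{-5/4}$. On an incoherent trilinear error the only available bound is $\|\op{G}\op{U}^{-1}[f_1f_2f_3]\|_{L^\infty_\xi}\le C\|f_1\|_{L^2}\|f_2\|_{L^2}\|f_3\|_{L^\infty_x}$ (this is what the proof of Lemma~\ref{lem_tech2} actually yields). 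Putting the profile defect $e=\op{M}\op{D}(\op{V}-1)\alpha$ in the $L^\infty$ slot gives $C\|u\|_{H^1}^2\cdot t^{-3/4}(\cdots)=O(t^{-3/4})$; putting it in an $L^2$ slot gives $\|(\op{V}-1)\alpha\|_{L^2}\le Ct^{-1/2}\|\op{J}u\|_{L^2}$ and hence only $O(t^{-1})$, and improving this $L^2$ bound to $t^{-3/4}$ would require $\op{J}^{3/2}u$, which is not among the norms on the right-hand side. The point being missed is that the $t^{1/2}$ gain which $\op{G}\op{U}^{-1}$ enjoys on the main term (turning an $O(t^{-3/2})$ input into an $O(t^{-1})$ output) comes from the coherent phase $e^{imx^2/(2t)}$, and the defect terms must be kept inside that structure. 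The paper therefore never approximates in $x$: it writes $\pa_x^s u=\op{M}\op{D}\op{V}\alpha^{(s)}$ exactly with $\alpha^{(s)}=(i\xi)^s\alpha$, uses $\op{D}[f]\op{D}[g]\op{D}[h]=t^{-1}\op{D}[fgh]$ to extract the $t^{-1}$ and conjugates $\op{G}\op{U}^{-1}\op{M}^{m}\op{D}=\op{V}^{-1}\op{E}^{m-1}$ exactly, and only then replaces $\op{V}\to 1$ and $\op{V}^{-1}\op{E}^{m-1}\to\op{E}^{(m-1)/m}\op{D}_{m}$ in the $\xi$-variable, where each replacement costs $t^{-1/4}$ in $L^\infty_\xi$ (estimate \eqref{est_W} and Lemma~\ref{lem_tech1}) against an $O(1)$ quantity sitting behind the already-extracted $t^{-1}$. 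Your source (c) is handled correctly in exactly this way; source (a) must be absorbed by the same device rather than by Lemma~\ref{lem_pointwise}. Your source (b), the $\tfrac{1}{it}\op{J}$-commutator errors from $\pa_x u=\tfrac{ix}{t}u+\tfrac{1}{it}\op{J}u$, is genuinely incoherent but harmless, since the extra $1/t$ combined with the trilinear bound of Lemma~\ref{lem_tech2} already gives $t^{-3/2}$; these are the terms $\tfrac{1}{it}r_l$ in the paper's proof.
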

The proof of this lemma will be given in the appendix. 
It follows from this lemma that
\begin{align}
 \label{profile_eq}
 \jb{\xi}^2 i\pa_t \alpha
 &=
 \op{G} \op{U}^{-1}(1-\pa_{x}^2) \op{L}u
 \\
 &=\op{G} \op{U}^{-1} N(u,u_x)-\op{G} \op{U}^{-1}\pa_x^2 N(u,u_x)
 \nonumber\\
 &=
 \frac{\jb{\xi}^2 \nu(\xi)}{t} |\alpha|^2\alpha
 + V+ R_0-R_2,
 \nonumber
\end{align}
where
\[
 V(t,\xi)
 =
 \frac{\xi e^{it\xi^2/3}}{t} p_1(\xi) \alpha_3^3
 + \frac{\xi e^{i2t\xi^2/3}}{t} p_2(\xi) \cc{\alpha_{-3}}^3
 + \frac{\xi e^{it\xi^2}}{t} p_3(\xi) |\alpha_{-1}|^2 \cc{\alpha_{-1}}
\]
with
$p_k(\xi)=\mu_{k,0}(\xi)-\mu_{k,2}(\xi)$ ($k=1,2,3$). 
We deduce from \eqref{assump} and \eqref{profile_eq} that
\[
 |\pa_t \alpha (t,\xi)| \le \frac{C\eps^2}{\jb{\xi}^2 t}, 
 \qquad (t,\xi) \in [1,T)\times \R.
\]
Also, by using the identity
\[
 \frac{\xi  e^{i\omega t\xi^2}}{t} A(t,\xi)
=
 i \pa_t 
 \left(\frac{ -i\xi e^{i\omega t\xi^2}}{1+i \omega t\xi^2} A(t,\xi)\right) 
 -
 t e^{i \omega t\xi^2}
 \pa_t\left( \frac{ \xi A(t,\xi)}{t(1+i\omega t\xi^2)}  \right),
\]
we see that $V$ can be splitted into the following form:
\[
 V =i\pa_t V_1 +V_2, \quad 
 |V_1(t,\xi)| \le \frac{C\eps^2}{t^{1/2}}, \quad
 |V_2(t,\xi)| \le \frac{C\eps^2}{t^{3/2}}.
\]
Note that 
\[
 \sup_{\xi \in \R} 
 \left|\frac{\xi }{1+i \omega t\xi^2} \right| 
 \le 
 \frac{C}{t^{1/2}}
\]
if $\omega \in \R\backslash \{0\}$. 
Now we set 
$\beta(t,\xi)=\jb{\xi}^2\alpha(t,\xi) -V_1(t,\xi)$ 
and $\kappa(\xi)= \jb{\xi}^{-4}\nu(\xi)$. Then we have
\begin{align}\label{ode_profile}
 i\pa_t \beta(t,\xi) 
 =
 \frac{\kappa(\xi)}{t} |\beta(t,\xi)|^2 \beta(t,\xi) +\rho(t,\xi),
\end{align}
where
\[
 \rho(t,\xi)
 =
 \frac{\kappa(\xi)}{t}
 \biggl( \bigl|\jb{\xi}^2 \alpha \bigr|^{2}\jb{\xi}^2 \alpha  
  -|\beta|^2\beta \biggr)
 +V_2(t,\xi)+R_0(t,\xi)-R_2(t,\xi).
\]
Remark that $\rho$ can be regarded as a remainder because we have
\[
 |\rho(t,\xi)|
 \le
 \frac{C}{t} \cdot \bigl(C\eps^{2/3} \bigr)^2 \cdot \frac{C\eps^2}{t^{1/2}}
 +
 \frac{C\eps^2}{t^{3/2}}
 +
 \frac{C}{t^{5/4}}\cdot \bigl(C\eps^{2/3} t^{\gamma}\bigr)^3
\le
 \frac{C \eps^2}{t^{1+\mu}}
\]
with $\mu=1/4-3\gamma>0$. 
Moreover we have
\begin{align*}
\sup_{\xi \in \R} \Bigl| \beta(1,\xi)
 -
 \eps \jb{\xi}^2 \hat{\varphi}(\xi) \Bigr|
&\le  
C \bigl\| 
  (1-\pa_x^2) \bigl(\op{U}(1)^{-1} u(1,\cdot)-\eps \varphi \bigr)
 \bigr\|_{H^{0,1}} 
 + \sup_{\xi \in \R} |V_1(1,\xi)|\\
&\le   
 C \int_0^1 \bigl\|N(u(t),u_x(t)) \bigr\|_{H^{2,1}}dt 
 + 
 C\eps^2\\
&\le
 C \left( \sup_{t\in [0,1]} \|u(t)\|_{H^3\cap H^{2,1}}\right)^3
 + C \eps^2\\
&\le
 C\eps^2.
\end{align*}
Therefore we can apply Lemma \ref{lem_ode} 
with 
$\theta_0(\xi)=\jb{\xi}^2 \hat{\varphi}(\xi)$ 
and $\tau_1=\tilde{\tau}_0$ 
to obtain $|\beta(t,\xi)| \le C\eps$, 
whence
\[
 \jb{\xi}^2|\alpha(t,\xi)|
 \le 
 |\beta(t,\xi)| + |V_1(t,\xi)|
 \le C\eps
\]
for $(t,\xi)\in [1,T)\times \R$, as desired.
\qed

\section{Proof of the main theorem}  \label{sec_proof}

Now we are in a position to prove Theorem \ref{thm_main}. 
First we state a standard local existence result without proof. 
Let $t_0 \ge 0$ be fixed, and consider the initial value problem 
\begin{align}
 \left\{\begin{array}{cl}
  \op{L} u = N(u,u_x), & t>t_0,\ x \in \R,\\
  u(t_0,x)=\psi(x), &x \in \R.
\end{array}\right.
\label{nls_loc}
\end{align}

\begin{lem} \label{lem_local}
Let $N$ be a cubic homogeneous polynomial in 
$(u,\cc{u}, u_x, \cc{u_x})$. 
Let $\psi\in H^3\cap H^{2,1}(\R)$. 
Then there exists $T_0=T_0(\|\psi\|_{H^3})>0$, independent of $t_0$, 
such that $\eqref{nls_loc}$ has a unique solution 
$u\in C\left([t_0,t_0+T_0);H^3\cap H^{2,1}(\R)\right)$.
\end{lem}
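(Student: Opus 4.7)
The plan is standard: introduce an approximating sequence, establish uniform-in-approximation $H^3 \cap H^{2,1}$ bounds on a common interval $[t_0, t_0+T_0)$ with $T_0$ depending only on $\|\psi\|_{H^3}$, and pass to the limit. The main obstacle is the apparent loss of one derivative when estimating $\pa_x^3$ of the cubic nonlinearity $N(u,u_x)$; this is precisely what the gauge transformation $\op{S}_\Phi$ of Lemmas~\ref{lem_smoothing}--\ref{lem_aux} is designed to absorb, so the $H^3$ part of the proof is essentially a byproduct of the machinery already developed in Section~\ref{sec_prelin}.

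Concretely, I would regularize by replacing $N(u,u_x)$ with $N(P_n u, P_n u_x)$ for a Friedrichs mollifier $P_n$, so that each regularized problem is an ODE on $L^2$ with a smooth global solution $u^{(n)} \in C([t_0,\infty);H^3\cap H^{2,1})$. Mimicking Step~3 of Section~\ref{sec_L2_est} with $\Phi_n = \eta(|u^{(n)}|^2 + |\pa_x u^{(n)}|^2)$ and a fixed large $\eta$, Lemmas~\ref{lem_smoothing} and~\ref{lem_aux} yield a differential inequality of the form
\[
 \frac{d}{dt}\bigl\|\op{S}_{\Phi_n} \pa_x^3 u^{(n)}(t)\bigr\|_{L^2}^2
 \le
 C\bigl(1 + \|u^{(n)}(t)\|_{H^3}^{M}\bigr),
\]
the dangerous gradient terms being absorbed by $\bigl\|\sqrt{\Phi_n}\op{S}_{\Phi_n}|\pa_x|^{1/2}\pa_x^3 u^{(n)}\bigr\|_{L^2}^2$ on the left-hand side. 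Lower-order estimates for $\|\pa_x^j u^{(n)}\|_{L^2}$ with $j \le 2$ are routine, no derivative being lost there. Combined with the $L^2$-equivalence of $\op{S}_{\Phi_n}$, a standard continuity argument produces a uniform $H^3$ bound on some $[t_0, t_0+T_0)$ with $T_0 = T_0(\|\psi\|_{H^3})$.

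For the weight, the identity $xv = \op{J}v - it\pa_x v$ together with $[\op{L},\op{J}]=0$ reduces the $H^{2,1}$ estimate to controlling $\|\op{J}u^{(n)}(t)\|_{H^2}$. Applying $\pa_x^2 \op{J}$ to the regularized equation produces the same top-order structure $q_1(u^{(n)},u^{(n)}_x) \pa_x(\pa_x^2 \op{J}u^{(n)}) + q_2(u^{(n)},u^{(n)}_x) \pa_x(\cc{\pa_x^2 \op{J}u^{(n)}})$ plus remainders whose $L^2$ norms are controlled by the already-established $H^3$ bound and $\|\op{J}u^{(n)}\|_{H^2}$ itself (cf.\ Step~4 of Section~\ref{sec_L2_est}), so the same gauge-transformed energy closes the estimate, with coefficients growing at most polynomially in $T_0$; shrinking $T_0$ if necessary yields the uniform $H^{2,1}$ bound. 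Passing to a subsequential limit, verifying the Cauchy property in $C([t_0,t_0+T_0);L^2)$ via an $\op{S}_\Phi$-gauged $L^2$-estimate on the differences $u^{(n)} - u^{(m)}$, and interpolating with the $H^3\cap H^{2,1}$ bound yields the solution $u\in C([t_0,t_0+T_0);H^3\cap H^{2,1})$; the same $L^2$-level estimate applied to the difference of two solutions gives uniqueness. Independence of $T_0$ from $t_0$ is automatic, since every estimate above is autonomous.
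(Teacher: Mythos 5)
The paper gives no proof of this lemma: it is stated as a standard local existence result and the reader is referred to \cite{KPV}, \cite{HO}, \cite{Chi}, \cite{KT}, \cite{HNP}. Your sketch follows the smoothing-operator route of \cite{HNP}, i.e.\ precisely the $\op{S}_{\Phi}$ machinery of Lemmas~\ref{lem_smoothing} and~\ref{lem_aux}, which is the natural choice and matches how the paper uses these lemmas in Section~\ref{sec_L2_est}. The overall architecture (mollify, uniform gauged energy estimates at the top order, compactness, an $L^2$-level contraction for uniqueness) is the standard one and is sound in outline. Two smaller points: the regularized problem is not literally ``an ODE on $L^2$'' since the free Schr\"odinger part remains unbounded; one should work with the Duhamel formulation, where $v\mapsto N(P_nv,P_n\pa_x v)$ is locally Lipschitz on $L^2$. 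And before invoking Lemma~\ref{lem_aux} you must commute $P_n$ past the $q_1(\cdot)\pa_x$ structure and control the resulting commutators uniformly in $n$; routine, but it should be said.

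The one genuine gap is in your treatment of the weight and of the $t_0$-independence. You propose to close the $\|\op{J}u^{(n)}\|_{H^2}$ estimate ``with coefficients growing at most polynomially in $T_0$, shrinking $T_0$ if necessary,'' and then assert that independence of $T_0$ from $t_0$ is ``automatic, since every estimate above is autonomous.'' Neither claim is correct as stated: $\op{J}=x+it\pa_x$ and the identity \eqref{id_J_and_Z} carry explicit factors of $t\in[t_0,t_0+T_0]$, so the weighted estimates are not autonomous, their coefficients grow with $t_0$ (not merely with $T_0$), and already the datum satisfies only $\|\op{J}u(t_0)\|_{H^2}\le \|x\psi\|_{H^2}+t_0\|\psi\|_{H^3}$. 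If $T_0$ had to be shrunk on account of these quantities, it would depend on $t_0$ and on $\|\psi\|_{H^{2,1}}$, contradicting the statement $T_0=T_0(\|\psi\|_{H^3})$. The correct resolution is that the equation satisfied by the weighted quantity --- e.g.\ $\op{L}(xu)=xN(u,u_x)+\pa_x u$, or the corresponding equation for $\op{J}u$ --- is \emph{linear} in that quantity, with coefficients controlled by the already-established (autonomous) $H^3$ bound and with the top-order derivative loss again absorbed by $\op{S}_{\Phi}$; Gronwall then gives finiteness of the $H^{2,1}$ norm on the entire interval determined by the $H^3$ estimate, without any further shrinking. Thus $T_0$ is fixed by the autonomous $H^3$ part alone, which is what makes it independent of $t_0$; the weighted norm of the solution may well depend on $t_0$, but that is permitted by the lemma.
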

See \cite{KPV}, \cite{HO}, \cite{Chi}, \cite{KT}, \cite{HNP}, 
etc., for more details on local existence theorems.\\

\noindent
{\em Proof of Theorem \ref{thm_main}}.\ 
Let $T_{\eps}$ be the lifespan 
defined in the statement of Theorem \ref{thm_main}. We remark that  
Lemma \ref{lem_local} with $t_0=0$ and $\psi=\eps \varphi$ implies 
$T_{\eps}>0$. Next we set 
\[
 T^*=\sup\{ T\in [0,T_\eps)\ |\, E(T) \le \eps^{2/3}\}.
\]
Note that $T^*>0$ if $\eps$ is suitably small, because of the estimate 
$E(0)\le C\eps \le (1/2) \eps^{2/3}$ and the continuity of 
$[0,T_{\eps}) \ni T \mapsto E(T)$. 
Now, we take $\sigma \in (0,\tilde{\tau}_0)$ and assume 
$T^*\le e^{\sigma/\eps^2}$. 
Then Lemma \ref{lem_apriori} with $T=T^*$ yields
\[
 E(T^*) \le K\eps \le \frac{1}{2} \eps^{2/3}
\]
if $\eps\le \min\{\eps_0, (2K)^{-3}\}$. By the continuity of 
$[0,T_{\eps}) \ni T \mapsto E(T)$, we can choose $\Delta>0$ such that 
\[
 E(T^*+\Delta) \le \eps^{2/3}.
\]
This contradicts the definition of $T^*$. Therefore we must have
$T^* \ge e^{\sigma/\eps^2}$ if $\eps$ is suitably small. 
Consequently, we have
\[
 \liminf_{\eps \to +0} \eps^2 \log T_{\eps} \ge \sigma.
\]
Since $\sigma \in (0,\tilde{\tau}_0)$ is arbitrary, we arrive at the desired 
conclusion.\qed

\appendix\section
{Proof of Lemmas \ref{lem_decomp} and \ref{lem_decomp2}}  

In this appendix, we will prove 
Lemmas \ref{lem_decomp} and \ref{lem_decomp2} along the idea of \cite{HN2}.

\subsection{Proof of Lemma \ref{lem_decomp}}

First we observe that the nonlinear term $N$ satisfying \eqref{cond_weak} 
can be written as $N=F+G$, where 
\begin{align}
\label{def_F}
 F=
 &a_1 u^2 u_x +a_2 u u_x^2 +a_3 u_x^3
 + b_1 \cc{u^2 u_x} + b_2 \cc{u u_x^2} 
 + b_3 \cc{u_x^3}
 \\
 &+
 c_1 \cc{u^2} u_x +  c_2  |u|^2 \cc{u_x} 
 +
 c_3 u \cc{u_x^2} + c_4 |u_x|^2 \cc{u} + c_5 |u_x|^2 \cc{u_x}
 \nonumber
\end{align}
and
\begin{align}
 G=\lambda_1 |u|^2 u + \lambda_2 |u|^2 u_x +\lambda_3 u^2\cc{u_x}
+
\lambda_4 |u_x|^2 u + \lambda_5 \cc{u} u_x^2 + \lambda_6 |u_x|^2 u_x 
\label{def_G}
\end{align}
with $a_j$, $b_j$, $c_j$, $\lambda_j \in \C$. Note that $G$ is 
gauge-invariant, while $F$ is not. 
By using the identities 
\begin{align}
 \phi  \pa_x \psi
=
 (\pa_x \phi) \psi
 +\frac{1}{it} \bigl( \phi \op{J}\psi-(\op{J}\phi) \psi \bigr)
\label{id_key1}
\end{align}
and 
\begin{align}
\phi \pa_x \cc{\psi}
=
 -(\pa_x \phi) \cc{\psi} + 
 \frac{1}{it} \bigl((\op{J}\phi)\cc{\psi} -\phi\cc{\op{J}\psi}\bigr),
\label{id_key2}
\end{align}
we see that $F$ can be splitted into 
$F=\pa_x F_1 +\frac{1}{it}F_2$, where  
\begin{align*}
 F_1=
&\frac{a_1}{3} u^3 + \frac{a_2}{3} u^2 u_x + \frac{a_3}{3} u_x^2 u 
+ \frac{b_1}{3} \cc{u^3} + \frac{b_2}{3} \cc{u^2 u_x} 
+ \frac{b_3}{3} \cc{u u_x^2}\\
&+
 (c_2-c_1) |u|^2 \cc{u} + c_3 |u|^2 \cc{u_x} 
 + c_4 \cc{u}^2 u_x + c_5 |u_x|^2\cc{u}
\end{align*}
and
\begin{align*}
F_2= 
&\frac{a_2}{3} u \bigl( u\op{J} u_x -u_x \op{J}u \bigr)
 + \frac{2a_3}{3} u_x \bigl( u \op{J}u_x -u_x \op{J}u \bigr)\\
&-\frac{b_2}{3}\cc{u \bigl(  u\op{J} u_x -u_x \op{J}u \bigr)} 
 - \frac{2b_3}{3} \cc{u_x \bigl( u \op{J}u_x -u_x \op{J}u \bigr)}\\
&+(c_2-2c_1)\cc{u} \bigl(u \cc{\op{J}u} -\cc{u} \op{J}u \bigr)
-c_3 \cc{u} \bigl( \cc{u}_x \op{J}u -u_x \cc{\op{J}u_x} \bigr)\\
&-c_4 \cc{u} \bigl( \cc{u}\op{J}u_x - u \cc{\op{J} u} \bigr)
-c_5 \cc{u} \bigl( \cc{u}_x \op{J}u_x - u_x \cc{\op{J}u_x} \bigr).
\end{align*}
We deduce from the relation \eqref{id_J_and_Z} that  
\begin{align*}
 \op{J}N(u,u_x)
 =
 (\op{Z}+2it\op{L})F_1 +\frac{1}{it} \op{J}F_2+ \op{J}G
 =
 \op{L} (tP) + Q,
\end{align*}
where
$P=2iF_1$ and $Q=(\op{Z}+2)F_1 + \frac{1}{it}\op{J}F_2 + \op{J}G$. 
By the Leibniz rule for $\op{Z}$, we have 
\[
 \|(\op{Z} +2)F_1\|_{L^2} 
 \le 
 C\|u\|_{W^{1,\infty}}^2\bigl( \|\op{Z} u\|_{H^1} + \|u\|_{H^1} \bigr).
\]
On the other hand, since $G$ is gauge-invariant, 
we can use the identity
\[
 \op{J}(f_1 f_2\cc{f_3}) =(\op{J}f_1)f_2\cc{f_3} +f_1(\op{J}f_2)\cc{f_3}
 -f_1f_2\cc{\op{J}f_3}
\]
to obtain
\[
 \|\op{J}G\|_{L^2} 
 \le 
 C\|u\|_{W^{1,\infty}}^2\bigl( \|\op{J}u\|_{H^1} + \|u\|_{L^2} \bigr).
\]
As for $F_2$, we see that 
\[
 \|\op{J}F_2\|_{L^2} 
 \le 
 C t \|u\|_{W^{2,\infty}}^2\bigl( \|\op{J} u\|_{H^1} + \|u\|_{L^2} \bigr)
 + C  \|u\|_{W^{2,\infty}}\bigl( \|\op{J}u\|_{H^2} + \|u\|_{H^1} \bigr)^2.
\]
Piecing them together, we arrive at the desired decomposition.\qed

\subsection{Proof of Lemma \ref{lem_decomp2}}

Before we proceed to the proof of Lemma \ref{lem_decomp2}, we introduce 
some notations. We put
\[
 \bigl( \op{M}(t) \phi \bigr)(x)
 =
 e^{i\frac{x^2}{2t}}\phi(x),
 \quad
 \bigl( \op{D}(t) \phi \bigr)(x) 
 = 
 t^{-1/2} \phi \left(\frac{x}{t} \right),
 \quad 
  \op{V}(t)  \phi 
 =
\op{G} \op{M}(t) \op{G}^{-1} \phi,
\]
so that $\op{U}(t)$ is decomposed into 
$\op{U}(t)=\op{M}(t) \op{D}(t) \op{G} \op{M}(t)
=\op{M}(t) \op{D}(t) \op{V}(t) \op{G}$. 
Note that 
\begin{align}
 \|(\op{V}(t)  -1)\phi\|_{L^{\infty}} 
+
 \|(\op{V}(t)^{-1} -1)\phi\|_{L^{\infty}} 
 \le 
 Ct^{-1/4}\|\phi\|_{H^1},
\label{est_W}
\end{align}
which comes from the inequalities 
$|e^{i\theta} -1|\le C|\theta|^{1/2}$
and 
$\|\phi\|_{L^{\infty}}
 \le 
Ct^{-1/2} \|\phi\|_{L^2}^{1/2}\|\op{J}\phi\|_{L^2}^{1/2}$. 
In what follows, we will occasionally omit ``$(t)$" 
from $\op{M}(t)$, $\op{D}(t)$, $\op{V}(t)$ 
if it causes no confusion, and we will write 
$\op{D}_{\omega}=\op{D}(\omega)$ for $\omega\in \R \backslash\{0\}$. 

\begin{lem}  \label{lem_tech2}
We have 
\[
 \|\op{G} \op{U}^{-1} [ f_1f_2f_3 ] \|_{L^{\infty}}
 +
 \|\op{G} \op{U}^{-1} [ f_1f_2\cc{f_3} ] \|_{L^{\infty}}
 \le
 \frac{C}{t^{1/2}}\|f_1\|_{L^2} \|f_2\|_{L^2} 
 \bigl(\|f_3\|_{L^{2}}+\|\op{J}f_3\|_{L^2} \bigr).
\]
\end{lem}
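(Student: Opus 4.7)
The plan is to exploit the factorization $\op{U}(t) = \op{M}(t)\op{D}(t)\op{V}(t)\op{G}$ just introduced. For each $j$, set $\tilde f_j = \op{G}\op{U}(t)^{-1}f_j$ and $\chi_j = \op{V}(t)\tilde f_j$. From the factorization one obtains the pointwise representation
\[
 f_j(x) = t^{-1/2}\,e^{ix^2/(2t)}\,\chi_j(x/t),
\]
and by the $L^2$-unitarity of $\op{G}$, $\op{M}$ and $\op{V}$ we have $\|\chi_j\|_{L^2} = \|\tilde f_j\|_{L^2} = \|f_j\|_{L^2}$. For the conjugated factor we use the analogous identity $\cc{f_3}(x) = t^{-1/2}e^{-ix^2/(2t)}\cc{\chi_3(x/t)}$.

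Next I would compute $\op{G}\op{U}(t)^{-1}[f_1 f_2 f_3](\xi)$ directly from the Fourier-side identity $\op{G}\op{U}(t)^{-1}\phi(\xi) = e^{-i\pi/4}e^{it\xi^2/2}\hat\phi(\xi)$, inserting the representation above and changing variable $y = x/t$. Completing the square in the resulting quadratic phase reduces the expression to
\[
 \op{G}\op{U}^{-1}[f_1 f_2 f_3](\xi)
 = \frac{e^{-i\pi/4}\,e^{it\xi^2/3}}{\sqrt{2\pi t}}
 \int_{\R} e^{(3it/2)(y-\xi/3)^2}\,\chi_1(y)\chi_2(y)\chi_3(y)\,dy,
\]
while the case $f_1 f_2 \cc{f_3}$ gives an expression of the same shape but with Gaussian phase $(it/2)(y-\xi)^2$ (and trivial prefactor). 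Taking absolute values, which annihilates the oscillatory phase, and applying H\"older with two $L^2$ factors and one $L^\infty$ factor, we obtain in both cases
\[
 |\op{G}\op{U}^{-1}[f_1 f_2 f_3](\xi)|
 +
 |\op{G}\op{U}^{-1}[f_1 f_2 \cc{f_3}](\xi)|
 \le \frac{C}{\sqrt{t}}\,\|\chi_1\|_{L^2}\|\chi_2\|_{L^2}\|\chi_3\|_{L^\infty}.
\]

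It only remains to bound $\|\chi_3\|_{L^\infty} = \|\op{V}(t)\tilde f_3\|_{L^\infty}$ by $\|f_3\|_{L^2}+\|\op{J}f_3\|_{L^2}$. Splitting $\op{V} = 1 + (\op{V}-1)$, using \eqref{est_W} for the second summand and the one-dimensional Sobolev embedding $H^1\hookrightarrow L^\infty$ for the first, we get $\|\op{V}\tilde f_3\|_{L^\infty} \le C\|\tilde f_3\|_{H^1}$ (uniformly for $t\ge 1$). Finally, the commutation identity $\op{J} = \op{U}\,x\,\op{U}^{-1}$ yields $\op{G}\op{U}^{-1}\op{J}f_3 = i\,\partial_\xi \tilde f_3$, so that $\|\tilde f_3\|_{H^1} \le C(\|f_3\|_{L^2}+\|\op{J}f_3\|_{L^2})$, and the desired inequality follows. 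No step presents a real obstacle; the only point requiring mild care is keeping track of the quadratic phases and sign conventions, in particular for the conjugated factor, which affects the phase of the Gaussian but not the final bound.
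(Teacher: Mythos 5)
Your proof is correct and rests on the same mechanism as the paper's: extract the factor $t^{-1/2}$ from the free propagator acting on a product, apply H\"older with two $L^2$ factors and one $L^{\infty}$ factor, and convert the $L^{\infty}$ factor into $\|f_3\|_{L^2}+\|\op{J}f_3\|_{L^2}$. The differences are minor but worth recording. Where you evaluate the Gaussian oscillatory integral explicitly by completing the square, the paper argues operator-theoretically from $\op{G}\op{U}^{-1}=\op{V}^{-1}\op{D}^{-1}\op{M}^{-1}$ together with $\|\op{V}^{-1}\phi\|_{L^{\infty}}\le Ct^{1/2}\|\phi\|_{L^1}$; after unwinding the definitions these are the same computation, and the explicit phases you track are irrelevant since you take absolute values anyway. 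The one substantive divergence is the treatment of the $L^{\infty}$ factor: the paper stays on the physical side and uses $\|f_3\|_{L^{\infty}}\le Ct^{-1/2}\|f_3\|_{L^2}^{1/2}\|\op{J}f_3\|_{L^2}^{1/2}$, valid for all $t>0$, whereas your route through $\|\chi_3\|_{L^{\infty}}\le\|\tilde{f}_3\|_{L^{\infty}}+\|(\op{V}-1)\tilde{f}_3\|_{L^{\infty}}$ and \eqref{est_W} produces a $t^{-1/4}$ that is only bounded for $t\ge 1$. Since the lemma is invoked only in the proof of Lemma \ref{lem_decomp2}, whose remainder bounds are stated for $t\ge 1$, this restriction is harmless, but you should state it explicitly --- or avoid it by noting $\|\chi_3\|_{L^{\infty}}=t^{1/2}\|f_3\|_{L^{\infty}}$ and applying the interpolation inequality above, which recovers the full range $t>0$.
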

\begin{proof}
From the relation 
$\op{G} \op{U}^{-1}
 =\op{V}^{-1} \op{D}^{-1} \op{M}^{-1}$
and the estimate 
$\|\op{V}^{-1} \phi \|_{L^{\infty}} \le Ct^{1/2}\|\phi\|_{L^1}$, 
it follows that  
\begin{align*}
\|\op{G} \op{U}^{-1} [ f_1 f_2 f_3] \|_{L^{\infty}}
&\le 
Ct^{1/2} \| \op{D}^{-1} \op{M}^{-1}[f_1f_2f_3] \|_{L^1}\\
&\le
C t^{1/2} \cdot t^{-1} 
\|(\op{D}^{-1} f_1)(\op{D}^{-1} f_2)
(\op{D}^{-1} \op{M}^{-1}f_3)\|_{L^1}\\
&\le
 C t^{-1/2} \|\op{D}^{-1} f_1\|_{L^2}
 \|\op{D}^{-1} f_2\|_{L^2} 
 \|\op{D}^{-1} \op{M}^{-1}f_3\|_{L^{\infty}}\\
&=
 C t^{-1/2} \|f_1\|_{L^2} \|f_2\|_{L^2} 
 \cdot t^{1/2} \|f_3\|_{L^{\infty}}\\
&\le 
  C t^{-1/2} \|f_1\|_{L^2} \|f_2\|_{L^2} 
 \bigl(\|f_3\|_{L^{2}}+\|\op{J}f_3\|_{L^2} \bigr).
\end{align*}
We have used the inequality 
$\|f\|_{L^{\infty}}\le C t^{-1/2}\|f\|_{L^2}^{1/2} \|\op{J}f\|_{L^2}^{1/2}$
in the last line. 
The estimate for  
$\|\op{G} \op{U}^{-1} (f_1 f_2 \cc{f_3})\|_{L^{\infty}}$ 
can be shown in the same way.
\end{proof}

Next we set 
$(\op{E}^{\omega} (t) f)(y)=e^{i \omega \frac{ty^2}{2}}f(y)$ and 
$\op{A}_{\omega}(t) = {\op{V}(t)^{-1} \op{E}^{\omega-1}(t)}
 - \op{E}^{\frac{\omega-1}{\omega}}(t) \op{D}_{\omega}$.

\begin{lem} \label{lem_tech1}
For $\omega \in \R\backslash\{0\}$, we have 
\[
 \| \op{A}_{\omega}(t) f\|_{L^{\infty}}
 \le 
 Ct^{-1/4} \|f\|_{H^1}.
\]
\end{lem}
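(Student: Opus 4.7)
The plan is to expose the stationary-phase structure of $\op{V}(t)^{-1}\op{E}^{\omega-1}(t)$ and recognise $\op{E}^{(\omega-1)/\omega}(t)\op{D}_{\omega}$ as its leading-order contribution, then transfer the resulting remainder to the Fourier side for estimation. Starting from $\op{V}(t)^{-1}=\op{G}\op{M}(t)^{-1}\op{G}^{-1}$ and a direct Fresnel computation of the intermediate $y$-integral, one first obtains the kernel representation
\[
 \op{V}(t)^{-1}\phi(\xi)
 =
 \frac{e^{-i\pi/4}\sqrt{t}}{\sqrt{2\pi}}\int_{\R} e^{it(\xi-y)^2/2}\phi(y)\,dy.
\]
Substituting $\phi(y)=e^{i(\omega-1)ty^2/2}f(y)$ and completing the square collapses the combined phase $\tfrac{t}{2}[(\xi-y)^2+(\omega-1)y^2]$ into $\tfrac{t\omega}{2}(y-\xi/\omega)^2+\tfrac{(\omega-1)t\xi^2}{2\omega}$, so the outer modulation $\op{E}^{(\omega-1)/\omega}(t)$ factors out. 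After rescaling $y=\xi/\omega+u/\sqrt{t|\omega|}$, the classical Fresnel identity $\int e^{i\,\mathrm{sgn}(\omega)u^2/2}du=\sqrt{2\pi}\,e^{i\,\mathrm{sgn}(\omega)\pi/4}$ applied at the stationary point $u=0$ extracts the leading term $\op{E}^{(\omega-1)/\omega}(t)\op{D}_{\omega}f(\xi)$ (interpreting $\omega^{-1/2}$ on the principal branch when $\omega<0$).

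The task is thus reduced to bounding, uniformly in $\xi$,
\[
 I(\xi)
 :=
 \int_{\R} e^{i\,\mathrm{sgn}(\omega)u^2/2}
 \Bigl[f\bigl(\xi/\omega+u/\sqrt{t|\omega|}\bigr)-f(\xi/\omega)\Bigr]du
\]
by $C(t|\omega|)^{-1/4}\|f\|_{H^1}$. I plan to move to frequency space via the identity $f(a+h)-f(a)=(2\pi)^{-1/2}\int \hat f(\eta)e^{ia\eta}(e^{ih\eta}-1)d\eta$, swap the order of integration (justified by inserting a Gaussian regulariser $e^{-\varepsilon u^2}$ and letting $\varepsilon\downarrow 0$), and evaluate the resulting $u$-integral by a second Fresnel calculation. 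This will yield
\[
 I(\xi)
 =
 e^{i\,\mathrm{sgn}(\omega)\pi/4}\int_{\R}\hat f(\eta)\,e^{i\xi\eta/\omega}
 \bigl(e^{-i\,\mathrm{sgn}(\omega)\eta^2/(2t|\omega|)}-1\bigr)d\eta.
\]

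The last step is a low/high frequency split at $|\eta|=\sqrt{t|\omega|}$. On $\{|\eta|\le\sqrt{t|\omega|}\}$ I will use $|e^{i\theta}-1|\le|\theta|$ together with Cauchy--Schwarz against $\|\eta\hat f(\eta)\|_{L^2}=\|f'\|_{L^2}$, producing the weight $(t|\omega|)^{-1}\bigl(\int_{|\eta|\le\sqrt{t|\omega|}}\eta^2\,d\eta\bigr)^{1/2}\sim(t|\omega|)^{-1/4}$; on $\{|\eta|>\sqrt{t|\omega|}\}$ I will use $|e^{i\theta}-1|\le 2$ paired with Cauchy--Schwarz with weight $\eta^{-1}$ borrowed from $\int_{|\eta|>\sqrt{t|\omega|}}\eta^{-2}d\eta\sim(t|\omega|)^{-1/2}$, again giving $(t|\omega|)^{-1/4}\|f'\|_{L^2}$. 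Summing both halves and absorbing the prefactor $|\omega|^{-1/2}$ inherited from the rescaling yields $\|\op{A}_\omega(t)f\|_{L^\infty}\le C|\omega|^{-3/4}t^{-1/4}\|f\|_{H^1}$. I expect the main point of care to be verifying that $H^1$ is exactly the right regularity for the two halves of the split to balance at the rate $t^{-1/4}$: relying instead on the cruder interpolation $|e^{i\theta}-1|\lesssim|\theta|^{1/2}$ alone would force half a derivative more on $f$.
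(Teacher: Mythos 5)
Your argument is correct and lands on the stated bound, even with the explicit constant $C|\omega|^{-3/4}$ that the paper's own proof implicitly produces. The first half coincides with the paper's: both start from the Fresnel kernel of $\op{V}(t)^{-1}$ and complete the square to pull out the modulation $\op{E}^{(\omega-1)/\omega}(t)$. Where you diverge is in handling the remainder. The paper notes that completing the square yields the \emph{exact} operator identity
\[
 \op{V}(t)^{-1}\op{E}^{\omega-1}(t)
 =
 \op{E}^{\frac{\omega-1}{\omega}}(t)\,\op{D}_{\omega}\,\op{V}(-\omega t),
\]
so that $\op{A}_{\omega}(t)f=\op{E}^{(\omega-1)/\omega}(t)\op{D}_{\omega}\bigl(\op{V}(-\omega t)-1\bigr)f$ and the lemma follows in one line from the previously recorded estimate \eqref{est_W}. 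You do not recognize your remainder $I(\xi)$ as (a constant multiple of) $(\op{V}(-\omega t)-1)f$ evaluated at $\xi/\omega$ --- which, after your Fourier-side computation, is exactly what it is --- and instead prove the needed bound from scratch via the low/high frequency split at $|\eta|=\sqrt{t|\omega|}$. That splitting is sound: both halves do give $(t|\omega|)^{-1/4}\|f'\|_{L^2}$, the Gaussian regularisation legitimises the interchange of integrals, and your closing remark is apt --- the crude bound $|e^{i\theta}-1|\lesssim|\theta|^{1/2}$ alone would leave a logarithmically divergent $\eta$-integral at the $H^1$ level, so the split (or an equivalent interpolation) is genuinely needed, which is also hidden inside the paper's one-line justification of \eqref{est_W}. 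In short: same skeleton, but your version re-proves the key smoothing-type estimate in a self-contained way and makes the role of $H^1$ transparent, while the paper's is shorter because it recycles \eqref{est_W}.
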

\begin{proof}
It follows from the relation 
${\op{V}(t)^{-1}}=\op{U}(\frac{1}{t})$ that 
\begin{align*}
 \op{V}(t)^{-1} \op{E}^{\omega-1}(t) f(\xi) 
 &= 
  \sqrt{\frac{t}{2\pi i}} \int_{\R} 
  e^{i\frac{t}{2} (\xi -y)^2} e^{i\frac{\omega-1}{2} t y^2} f(y)\,dy\\
 &= 
  e^{i\frac{\omega-1}{2\omega} t\xi^2}
 \sqrt{\frac{t}{2\pi i}} \int_{\R} 
  e^{i\frac{\omega t}{2} \left(\frac{\xi}{\omega} -y\right)^2} f(y)\,dy
  \\
 &= 
  \op{E}^{\frac{\omega -1}{\omega}}(t) \op{D}_{\omega} \op{V}(-\omega t)f(\xi).
\end{align*}
Hence we deduce from \eqref{est_W} that 
\begin{align*}
 \|\op{A}_{\omega}(t) f\|_{L^{\infty}}
 =
 \|
  \op{E}^{\frac{\omega -1}{\omega}}(t) \op{D}_{\omega}
  (\op{V}(-\omega t) -1)f
 \|_{L^{\infty}}
 \le
 C\|(\op{V}(-\omega t) -1)f\|_{L^{\infty}}
 \le
 Ct^{-1/4} \|f\|_{H^1}.
\end{align*}
\end{proof}

Now we are going to prove Lemma \ref{lem_decomp2}. 
For simplicity of exposition, we consider only the case where 
\begin{align}
 N(u,u_x)=\lambda |u_x|^2u_x +au_x^3 +b \cc{u_x^3} +c |u_x|^2\cc{u_x}.
 \label{nonlin_sp}
\end{align}
General cubic terms $N$ satisfying \eqref{cond_weak} 
(or, equivalently, $N=F+G$ with \eqref{def_F} and \eqref{def_G}) 
can be treated in the same way. Note that 
\[
 \nu(\xi)
=i\xi^3 \int_{0}^{2\pi}
 \left( 
  \lambda e^{i\theta}   -a e^{3i\theta}  +{b}e^{-3i\theta}-{c}e^{-i\theta} 
 \right)
 e^{-i\theta} \frac{d\theta}{2\pi }
 =
 i \lambda \xi^3 
\]
if $N$ is given by \eqref{nonlin_sp}, 
whereas 
\begin{align}
 \nu(\xi)=\lambda_1+ i(\lambda_2 -\lambda_3)\xi 
 +(\lambda_4 -\lambda_5)\xi^2 +i\lambda_6 \xi^3
\label{nu_explicit}
\end{align}
if $N=F+G$ with \eqref{def_F}, \eqref{def_G}.

First we consider the case of $l=0$. 
We put $\alpha^{(s)}=(i\xi)^s \alpha$ so that 
$\pa_x^s u= \op{M} \op{D} \op{V} \alpha^{(s)}$. 
We also set 
$\bigl( \op{M}^{\omega}(t) f \bigr)(y)=e^{i\omega \frac{y^2}{2t}}f(y)$.
Then it follows that 
\begin{align*}
 N(u,u_x)
 =& 
 \lambda \op{M} 
 |\op{D} \op{V} \alpha^{(1)}|^2 \op{D} \op{V} \alpha^{(1)}
 +
 a \op{M}^3 
  (\op{D} \op{V} \alpha^{(1)})^3\\
 &+
 b \op{M}^{-3} 
  (\cc{\op{D} \op{V} \alpha^{(1)}})^3
 +
 c \op{M}^{-1} 
 |\op{D} \op{V} \alpha^{(1)}|^2 \cc{\op{D} \op{V} \alpha^{(1)}}\\
 =& 
 \frac{\lambda}{t} \op{M} 
 \op{D} \Bigl[ |\op{V} \alpha^{(1)}|^2 \op{V} \alpha^{(1)} \Bigr]
 +
 \frac{a}{t} \op{M}^3 \op{D} \Bigl[
  (\op{V} \alpha^{(1)})^3 \Bigr]\\
 &+
 \frac{b}{t} \op{M}^{-3} \op{D} \Bigl[
  (\cc{\op{V} \alpha^{(1)}})^3 \Bigr]
 +
 \frac{c}{t} \op{M}^{-1} \op{D} \Bigl[
 |\op{V} \alpha^{(1)}|^2 \cc{\op{V} \alpha^{(1)}} \Bigr].
\end{align*}
By the relation 
$\op{G} \op{U}^{-1} \op{M}^\omega \op{D}
= \op{V}^{-1} \op{E}^{\omega-1}$, 
we have
\begin{align*}
 \op{G} \op{U}^{-1} N(u,u_x)
 =& 
 \frac{\lambda}{t} 
 \op{V}^{-1}\Bigl[|\op{V} \alpha^{(1)}|^2  \op{V} \alpha^{(1)}
 \Bigr]
 +
 \frac{a}{t} \op{V}^{-1} \op{E}^2 
   \Bigl[ (\op{V} \alpha^{(1)})^3 \Bigr]\\
 &\hspace{10mm} +
 \frac{b}{t} \op{V}^{-1} \op{E}^{-4} \Bigl[
  (\cc{\op{V} \alpha^{(1)}})^3 \Bigr]
 +
 \frac{c}{t} \op{V}^{-1} \op{E}^{-2} \Bigl[
 |\op{V} \alpha^{(1)}|^2 \cc{\op{V} \alpha^{(1)}} 
 \Bigr]\\
 =&
 \frac{\lambda}{t} i\xi^3 |\alpha|^2 \alpha 
 + \frac{a }{t} \op{E}^{\frac{2}{3}} \op{D}_{3}
    \Bigl[-i\xi^3 \alpha^3 \Bigr]\\
 &
 + \frac{b }{t} \op{E}^{\frac{4}{3}} \op{D}_{-3}
    \Bigl[i\xi^3 \cc{\alpha}^3 \Bigr]
 + \frac{c }{t} \op{E}^{2}  \op{D}_{-1}
    \Bigl[-i\xi^3 |\alpha|^2\cc{\alpha} \Bigr]
 +  \frac{\Omega_0}{t},
\end{align*}
where  
\begin{align*}
 \Omega_0
 =&
 -\lambda 
 \Bigl(
  |\alpha^{(1)}|^2  \alpha^{(1)}
  - |\op{V} \alpha^{(1)}|^2 \op{V} \alpha^{(1)} 
 \Bigr)
 +
 \lambda \bigl( \op{V}^{-1} - 1 \bigr)
 \Bigl[ |\op{V} \alpha^{(1)}|^2 \op{V} \alpha^{(1)} \Bigr]\\
 &-
 a \op{E}^{\frac{2}{3}}(t)  \op{D}_{3}
 \Bigl[
  (\alpha^{(1)})^3 - (\op{V} \alpha^{(1)})^3
 \Bigr]
 +
 a \op{A}_{3}(t) \Bigl[(\op{V} \alpha^{(1)})^3 \Bigr]\\
 &-
 b \op{E}^{\frac{4}{3}}(t)  \op{D}_{-3} 
 \Bigl[ 
  (\cc{\alpha^{(1)}})^3 - (\cc{\op{V} \alpha^{(1)}})^3
 \Bigr]
 +
 b \op{A}_{-3}(t) \Bigl[(\cc{\op{V} \alpha^{(1)}})^3 \Bigr]\\
 &-
 c \op{E}^{2}(t)  \op{D}_{-1} 
 \Bigl[
  |\alpha^{(1)}|^2 \cc{\alpha^{(1)}}  
  -
  |\op{V} \alpha^{(1)}|^2 \cc{\op{V} \alpha^{(1)}} 
 \Bigr]
 +
 c \op{A}_{-1}(t) 
 \Bigl[|\op{V} \alpha^{(1)}|^2 \cc{\op{V} \alpha^{(1)}}\Bigr].
\end{align*}
By virtue of \eqref{est_W} and Lemma \ref{lem_tech1}, we see that 
\[
 \|\Omega_0\|_{L^{\infty}} \le \frac{C}{t^{1/4}} \|u\|_{H^2}^3.
\]
Therefore we obtain \eqref{id_key} with $l=0$ by putting 
$\mu_{1,0}(\xi)=-i\frac{a}{27\sqrt{3}}\xi^2$, 
$\mu_{2,0}(\xi)=\frac{-b}{27\sqrt{3}}\xi^2$, 
$\mu_{3,0}(\xi)=c\xi^2$.

Next we consider the case of $l=1$. 
It follow from the identity \eqref{id_key2} that 
\begin{align*}
 \pa_x N(u,u_x) 
 =
  \lambda  |u_x|^2 u_{xx}
 + 3a u_x^2 u_{xx} + 3 b \cc{u_x^2 u_{xx}} +c |u_x|^2 \cc{u_{xx}}
 + \frac{1}{it}r_1, 
\end{align*}
where 
$
r_1=
(\lambda u_x +c\cc{u}_x)\bigl( (\op{J} u_x)\cc{u_x} -u_x \cc{\op{J}u_x}\bigr)
$. 
By Lemma \ref{lem_tech2}, we obtain
\[
 \|\op{G} \op{U}^{-1} r_1\|_{L^{\infty}}
 \le
 \frac{C}{t^{1/2}}\bigl( \|u\|_{H^1} + \|\op{J}u \|_{H^1} \bigr)^3.
\]
We also set 
$h_1= \lambda |u_x|^2 u_{xx}  + 3a u_x^2 u_{xx} 
+ 3 b \cc{u_x^2 u_{xx}}+c |u_x|^2 \cc{u_{xx}}$ 
so that 
\[
 \op{G}\op{U}^{-1}\pa_x N(u,u_x)
 =
 \op{G}\op{U}^{-1} h_1+\frac{1}{it}\op{G}\op{U}^{-1} r_1.
\]
Then, as in the previous case, we have
\begin{align*}
 &\op{G}\op{U}^{-1} h_1\\
 &=
 -\frac{\lambda}{t} \xi^4 |\alpha|^2 \alpha 
 + 
  \frac{3a }{t} \op{E}^{\frac{2}{3}} \op{D}_{3} 
   \Bigl[ \xi^4 \alpha^3 \Bigr]
 + 
  \frac{3b }{t} \op{E}^{\frac{4}{3}} \op{D}_{-3}
   \Bigl[ \xi^4 \cc{\alpha}^3 \Bigr]
 + 
   \frac{c }{t} \op{E}^{2} \op{D}_{-1}
   \left[ -\xi^4 |\alpha|^2\cc{\alpha} \right] 
 +  \frac{\Omega_1}{t},
\end{align*}
where 
\begin{align*}
 \Omega_1
 =&
 -\lambda 
 \Bigl(
  |\alpha^{(1)}|^2  \alpha^{(2)} 
  - 
  |\op{V} \alpha^{(1)}|^2  \op{V} \alpha^{(2)}
 \Bigr)
 +
 \lambda \Bigl(\op{V}^{-1} -1 \Bigr)
 \Bigl[|\op{V} \alpha^{(1)}|^2  \op{V} \alpha^{(2)} \Bigr]\\
 &-
  3a \op{E}^{\frac{2}{3}}(t) \op{D}_{3}
 \Bigl[
  (\alpha^{(1)})^2 \alpha^{(2)} 
  -  
  (\op{V} \alpha^{(1)})^2 \op{V} \alpha^{(1)}
  \Bigr]
 +
 3a \op{A}_{3}(t) 
  \Bigl[(\op{V} \alpha^{(1)})^2 \op{V} \alpha^{(2)} \Bigr]\\
 &-
 3b \op{E}^{\frac{4}{3}}(t)  \op{D}_{-3} 
 \Bigl[
  \cc{(\alpha^{(1)})^2 \alpha^{(2)}}
  -
  \cc{(\op{V} \alpha^{(1)})^2 \op{V} \alpha^{(2)}}
 \Bigr]
 +
 3b \op{A}_{-3}(t)
 \Bigl[\cc{(\op{V} \alpha^{(1)})^2 \op{V} \alpha^{(2)}} \Bigr]\\
 &+
 c \op{E}^{2}(t)  \op{D}_{-1} 
 \Bigl[
 |\alpha^{(1)}|^2 \cc{\alpha^{(2)}} 
 - 
 |\op{V} \alpha^{(1)}|^2 \cc{\op{V} \alpha^{(2)}} 
 \Bigr] 
 +
 c \op{A}_{-1}(t)  
  \Bigl[|\op{V} \alpha^{(1)}|^2 \cc{\op{V} \alpha^{(2)}} \Bigr].
\end{align*}
By \eqref{est_W} and Lemma \ref{lem_tech1}, we have 
\[
 \|\Omega_1\|_{L^{\infty}} \le \frac{C}{t^{1/4}} \|u\|_{H^3}^3.
\]
Therefore, by setting 
$\mu_{1,1}(\xi)=\frac{a}{27\sqrt{3}}\xi^3$, 
$\mu_{2,1}(\xi)=\frac{b}{27\sqrt{3}i}\xi^3$, 
$\mu_{3,1}(\xi)=-ic\xi^3$, 
we obtain \eqref{id_key} with $l=1$.

Finally we consider the case of $l=2$. 
From the identities \eqref{id_key1} and \eqref{id_key2}, 
it follows that 
\begin{align*}
 \pa_x^2 N(u,u_x) 
 =&
   \lambda u_x \cdot \cc{u_x} \pa_x (u_{xx}) 
+
   c \cc{u_x} \cdot u_x \pa_x(\cc{u_{xx}})
+
 (\lambda u_{xx}  +c \cc{u_{xx}})\pa_x\bigl(|u_x|^2 \bigr)\\
 &+ 
 3au_x \bigl( 2 u_{xx}^2  +  u_x \pa_x (u_{xx}) \bigr)
 + 3b \cc{u_x \bigl( 2 u_{xx}^2+   u_x \pa_x (u_{xx}) \bigr) } 
 +
 \frac{1}{it}\pa_x r_1\\
=&
 h_2 + \frac{1}{it}r_2,
\end{align*}
where 
$h_2= -\lambda |u_{xx}|^2 u_{x}  + 9a u_{xx}^2 u_{x} 
+ 9 b \cc{u_{xx}^2 u_{x}} -c |u_{xx}|^2 \cc{u_{x}}$ 
and
\begin{align*}
 r_2
 =&
 \lambda u_x 
 \big((\op{J} u_{xx})\cc{u_x} -u_{xx} \cc{\op{J}u_x} \bigr) 
 +
 c\cc{u}_x
 \bigl((\op{J} u_x)\cc{u_{xx}} -u_x \cc{\op{J} u_{xx}} \bigr)\\
&+
  (\lambda u_{xx}  +c \cc{u_{xx}})
 \bigl( (\op{J} u_x)\cc{u_x} -u_x \cc{\op{J}u_x} \bigr) 
 +
 3a u_x \bigl( u_x \op{J}u_{xx} -(\op{J}u_x) u_{xx}\bigr)\\
 &
 - 
 3b \cc{u_x \bigl( u_x \op{J}u_{xx} - (\op{J}u_x) u_{xx} \bigr)}
 +
 \pa_x r_1.
\end{align*}
We deduce as before that
\begin{align*}
 &\op{G}\op{U}^{-1} h_2\\
 &= 
 -\frac{\lambda}{t}  i\xi^5 |\alpha|^2 \alpha 
 + \frac{9a}{t} \op{E}^{\frac{2}{3}} \op{D}_{3} 
    \Bigl[i\xi^5 \alpha^3 \Bigr]
 + \frac{9b}{t} \op{E}^{\frac{4}{3}} \op{D}_{-3}
    \Bigl[-i\xi^5 \cc{\alpha}^3\Bigr]
  - \frac{c}{t} \op{E}^{2} \op{D}_{-1}
   \Bigr[-i\xi^5 |\alpha|^2\cc{\alpha} \Bigr]
 +  \frac{\Omega_2}{t}
\end{align*}
with
\[
 \|\Omega_2\|_{L^{\infty}} \le \frac{C}{t^{1/4}} \|u\|_{H^3}^3.
\]
We also have 
\[
 \|\op{G} \op{U}^{-1} r_2\|_{L^{\infty}}
 \le
 \frac{C}{t^{1/2}}\bigl( \|u\|_{H^2} + \|\op{J}u\|_{H^2} \bigr)^3
\]
by virtue of Lemma \ref{lem_tech2}. 
Now we set 
$\mu_{1,2}(\xi)=\frac{ai}{27\sqrt{3}}\xi^4$, 
$\mu_{2,2}(\xi)=\frac{b}{27\sqrt{3}}\xi^4$, 
$\mu_{3,2}(\xi)=-c\xi^4$. 
 Then we arrive at \eqref{id_key} with $l=2$. 
This completes the proof of Lemma \ref{lem_decomp2}.
\qed

\medskip
\subsection*{Acknowledgments}
The authors are grateful to  Professor Soichiro Katayama for his useful 
conversations on this subject. 
The work of H.~S. is supported by Grant-in-Aid for Scientific Research (C) 
(No.~25400161), JSPS.


\end{document}